\numberwithin{equation}{section}
\newcommand{\N}{\mathbb{N}}
\newcommand{\R}{\mathbb{R}}
\newcommand{\s}{\mathbb{S}}
\newcommand{\Z}{\mathbb{Z}}
\newcommand{\ve}{{\varepsilon}}
\newcommand{\pa}{{\partial}}
\newcommand{\BB}{{\mathcal{B}}}
\newcommand{\FF}{{\mathcal{F}}}
\newcommand{\LL}{{\mathcal{L}}}
\newcommand{\OO}{{\mathcal{O}}}
\newcommand{\MM}{{\mathcal{M}}}
\newcommand{\NN}{{\mathcal{N}}}
\newcommand{\RR}{{\mathcal{R}}}
\renewcommand{\SS}{{\mathcal{S}}}
\newcommand{\TT}{{\mathcal{T}}}
\newcommand{\UU}{{\mathcal{U}}}
\newcommand{\VV}{{\mathcal{V}}}
\DeclareMathOperator{\Diff}{D}
\DeclareMathOperator{\sgn}{sgn}
\DeclareMathOperator*{\range}{ran}
\DeclareMathOperator{\spn}{span}
\newtheorem{theorem}{Theorem}[section]
\newtheorem{lemma}[theorem]{Lemma}
\newtheorem{proposition}[theorem]{Proposition}
\theoremstyle{definition}
\newtheorem{example}[theorem]{Example}
\renewcommand{\SS}{\mathcal{S}}
\title{Trimodal steady water waves}
\author{Mats Ehrnstr\"om}
\address{Department of Mathematical Sciences, Norwegian University of Science and Technology, 7491 Trondheim, Norway}
\email{mats.ehrnstrom@math.ntnu.no}
\author{Erik Wahl\'en}
\address{Centre for Mathematical Sciences, Lund University, PO 
Box 118, 221\,00 Lund, Sweden}
\email{erik.wahlen@math.lu.se}
\keywords{Steady water waves; Multi-modal waves; Critical layers; Vorticity; Three-dimensional bifurcation} 
\subjclass[2010]{76B15, 35Q35, 35B32, 76B47}
\begin{document}

\begin{abstract}
We construct three-dimensional families of small-amplitude gravity-driven rotational steady water waves on finite depth. The solutions contain counter-currents and multiple crests in each minimal period. Each such wave generically is a combination of three different Fourier modes, giving rise to a rich and complex  variety of wave patterns. The bifurcation argument is based on a blow-up technique, taking advantage of three parameters associated with the vorticity distribution, the strength of the background stream, and the period of the wave.
\end{abstract}

\maketitle

\section{Introduction}\label{sec:introduction}

An explicit example of a rotational wave in water of infinite depth was constructed by Gerstner already in 1802 \cite{Gerstner09} (see also \cite{MR2867413} for a more modern treatment of this topic). In spite of this, irrotational flows continued to draw much attention, and the first existence proof for small-amplitude solutions with a general distribution of vorticity, due to Dubreil-Jacotin, was published first in 1934 \cite{0010.22702}.  Only with the paper \cite{MR2027299} by Constantin \& Strauss in 2004 a general result for large-amplitude waves appeared. Using bifurcation and degree theory they constructed a global continuum of waves with an arbitrary vorticity distribution. Since then, much attention has been brought to the richness of phenomena appearing in rotational flows.

Whereas existence proofs for irrotational waves typically involve a perturbation argument starting from a still, or uniform, stream, a general vorticity distribution allows for perturbing very involved background streams, including such with stagnation and so-called critical layers. Critical layers, which are regions of the fluid consisting entirely of closed streamlines,  can be ruled out in the case of irrotational currents; such waves admit at most one stagnation point, which can be located only at the crest of the wave \cite{MR2273961}. In contrast, there are classes of rotational currents allowing for arbitrarily many internal stagnation points and critical layers, the exact number depending on the vorticity distribution and the values of certain parameters.

The first rigorous existence proof for exact steady water waves with a critical layer was given in \cite{Wahlen09}, soon to be followed by the investigation \cite{CV09} by Constantin and Varvaruca. Both investigations used bifurcation theory by  Crandall and Rabinowitz \cite{MR0288640} to establish the existence of small-amplitude waves of constant vorticity. The paper \cite{Wahlen09} is a rigorous justification of linear waves with critical layers found in \cite{MR2409513} as a natural extension of the exact waves constructed in \cite{MR2027299}; it may be extended to more general vorticity distributions. The theory developed in~\cite{CV09} focuses on the case of constant vorticity and encompasses free surfaces that are not graphs of functions, thereby allowing for the possibility of overhanging waves. 

Since linear shear flows (constant vorticity) allow for at most one critical layer, the authors of \cite{MR2964741} and \cite{eew10} considered flows in which the relative horizontal velocity is oscillating. As it turns out, an affine vorticity distribution is enough to induce several critical layers. The solution classes for such vorticity distributions were investigated in \cite{MR2964741}. The corresponding existence theory was pursued in \cite{eew10}. In that paper, a construction of bimodal waves with critical layers and two crests in each minimal period is contained, a class of waves previously known to exist only in the irrotational setting (see \cite{BDT00} and \cite{Jones89,JonesToland86, JonesToland85}). A later contribution to the theory of bimodal waves is \cite{MM13}.

The current paper answers the following natural question appearing in \cite{eew10}. 
Does higher-dimensional bifurcation occur---i.e., can one find additional parameters in order to obtain $n$-modal waves, where $n\ge 3$?

As it turns out, the wave-length parameter can be made to play exactly this desired role (the other parameters appearing naturally in the problem are too closely related, cf. \cite[Remark 4.10]{eew10}), and we are able to answer this question affirmatively. The situation is, however, much more complicated than in lower-dimensional bifurcation, and a precise investigation of the different solution sets in necessary to understand the possible combinations of different wave modes that may appear (see Section~\ref{sec:solutions}). Whether one could find waves of even more complicated form---in the shape, for example, of quadmodal surface profiles---is left as an open problem. The current setting seems not to allow for this, but it would most probably be possible to add surface tension or stratification to achieve this goal, if desired.  

Some comments on related work: A different continuation of the theory on waves with critical layers is the consideration of arbitrary vorticity distributions while still allowing for internal stagnation. Such an investigation was initiated by Kozlov and Kuznetsov in \cite{KK11}, where they examine and classify the running streams (with flat, but free, surfaces) induced by Lipschitz-continuous vorticity distributions. In~\cite{KK10} the same authors give several a priori bounds for waves associated with such a general vorticity distribution, and in \cite{1207.5181} they describe the local bifurcation of waves with critical layers and a general vorticity distribution. An additional recent contribution in this field is the work by Shatah, Walsh and Zeng \cite{1211.3314}, in which exact travelling water waves with compactly supported vorticity distribution are constructed, a new addition in the theory of waves with critical layers. 

The above works on exact waves with critical layers are all on gravity-driven waves in water of finite depth and constant density. A theory for rotational waves in the presence of stratification --- i.e. for a fluid of non-constant density --- was  recently developed by Walsh \cite{MR2529956}. In the investigation \cite{EMM11} Escher, Matioc and Matioc combine ideas from \cite{Wahlen09} and \cite{MR2529956}  to prove the existence  of stratified waves with a linear density distribution and critical layers. This yields the existence of waves whose properties are not that distant from those of constant vorticity, but that may still include a second critical layer. Two further developments in this direction, establishing waves of larger amplitude, are \cite{MB2013} and \cite{HMA2013}.

One of the interesting open questions in this field is the existence of waves with an even more complicated surface profile than the ones constructed in this or any of the above-mentioned papers. In the case of constant vorticity, numerical studies \cite{KoStrauss08, OkamotoShoji01, Teles-daSilvaPeregrine88, Vanden-Broeck_1996} indicate the existence of large-amplitude waves with overhanging profiles and critical layers, as well as non-symmetric surface profiles with several peaks. So far there are no rigorous results confirming these intriguing phenomena.

The outline of this work is as follows: In Section~\ref{sec:formulation} we present the problem in a setting appropriate for our aims, and in Section~\ref{sec:functional} the corresponding functional-analytic framework. Section~\ref{sec:threedim} contains the construction of the actual three-dimensional kernels, and Section~\ref{sec:bifurcation} the Lyapunov--Schmidt reduction and following proof of trimodal gravity water waves. Finally, in Section~\ref{sec:solutions} we investigate the structure of solution sets appearing in the different Fourier regimes (the waves obtained depend on which modes interact). A few illustrations and a concrete example showing the complexity of the waves are given.

\section{Mathematical formulation}\label{sec:formulation}
We consider steady (travelling) waves in two dimensions. Let $B = \{ y = 0\}$ denote the flat bed and $S := \{ y = 1 + \eta(x)\}$ the free surface. The fluid domain is naturally defined by $\Omega := \{ (x,y)\in \R^2 \colon 0 < y < 1 + \eta(x) \}$, and the steady water-wave problem is to find a stream function $\psi$ such that  
\begin{subequations}
\label{eqs:problem}
\begin{equation}
\begin{aligned}
\{ \Delta \psi, \psi \} &= 0 \qquad &&\text{in} \: &\Omega,\\
\psi &= m_0 \qquad &&\text{on} \: &B, \\
\psi &= m_1 \qquad &&\text{on} \: &S, \\
{\textstyle \frac{1}{2}} |\nabla \psi|^2+\eta &=Q \qquad &&\text{on} \: &S.  
\end{aligned}
\end{equation}
Here $\{f,g\} := f_x g_y - f_y g_x$ denotes the Poisson bracket, $\Delta := \partial_x^2 + \partial_y^2$ is the Laplace operator, and $m_0$, $m_1$ and $Q$ are arbitrary constants. The formulation~\eqref{eqs:problem} is equivalent to the Euler equations for a gravity-driven, inviscid, incompressible fluid of constant density and finite depth \cite{MR2964741}, and it allows for both rotation, i.e. $\Delta \psi \neq 0$, and stagnation, i.e. $\nabla \psi = 0$. We consider the case
\begin{equation}
\Delta \psi = \alpha \psi \qquad \text{in} \quad \Omega,
\end{equation}
\end{subequations}
when the vorticity distribution is linear in $\psi$; affine distributions may be incorporated by translation of $\psi$.

Waves exist for any wavelength $\kappa$ (see \cite{eew10}), so we choose to normalise the period to $2\pi$ by a transformation $x \mapsto \kappa x$. In a similar fashion we map the vertical variable onto one of unit range, so that
\[
(x,y) \quad \mapsto \quad (q,s) := \left(\kappa x, \frac{y}{1 + \eta(\kappa x)} \right)
\]
describes the transformation of $\Omega$ onto the strip $\hat \Omega := \{ (q,s) \in \R^2 \colon s \in (0,1)\}$. Let $\hat \psi(q,s) := \psi(x,y)$. Since all $x$-derivates in \eqref{eqs:problem} appear in pairs, it is natural to introduce a wavelength parameter $\xi := \kappa^2$. In the new coordinates the problem~\eqref{eqs:problem} takes the form
\begin{equation}
\label{eq:flatproblem}
\begin{aligned}
\xi \left(\hat \psi_{q}-\frac{s\eta_q\hat\psi_s}{1+\eta}\right)_q - \frac{\xi  s\eta_q}{1+\eta}\left(\hat \psi_{q}-\frac{s\eta_q\hat\psi_s}{1+\eta}\right)_s+\frac{\hat\psi_{ss}}{(1+\eta)^2} &= \alpha \hat \psi  &&\text{in }   \hat \Omega,\\
\hat \psi &= m_0  &&\text{on } s=0, \\
\hat \psi &= m_1  &&\text{on }  s=1, \\
\frac{\xi}{2} \left(\hat\psi_q-\frac{s\eta_q\hat\psi_s}{1+\eta}\right)^2+\frac{\hat \psi_s^2}{2(1+\eta)^2}+\eta &= Q && \text{on }  s=1.  
\end{aligned}
\end{equation}
We pose the problem for $\eta \in C^{2+\beta}_{\text{even}}(\s,\R)$ and $\psi \in C^{2+\beta}_{\text{per,even}}(\overline \Omega, \R)$, where the subscripts \emph{per} and \emph{even} denotes $2\pi$-periodicity and evenness in the horizontal variable, and we have identified $2\pi$-periodic functions with functions defined on the unit circle $\s$. We also require that $\min \eta > -1$. The parameter $\alpha$ influences the nature of $\psi$ in a substantial way (see \cite{MR2964741}), and in order to obtain the desired triply-periodic waves we shall assume that $\alpha<0$.

\subsubsection*{Laminar flows}
Laminar flows are simultaneous solutions of~\eqref{eqs:problem} and~\eqref{eq:flatproblem} for which 
$\eta=0$ and $\psi$ is independent of $q$. They are given 
by the formula 
\begin{equation}
\label{eq:trivial}
\psi_0(\cdot;\mu, \lambda, \alpha) := \mu \cos(\theta_0 (\cdot-1)+\lambda),
\end{equation}
where $\mu, \lambda \in \R$ are arbitrary constants and 
\[
\theta_k:=|\alpha+\xi k^2|^{1/2}, \qquad k \in \Z.
\]
The values of $Q=Q(\mu, \alpha, \lambda)$, $m_0=m_0(\mu, \alpha, \lambda)$ 
and  $m_1=m_1(\mu,\lambda)$ 
are determined from~\eqref{eqs:problem}, i.e.
\begin{equation}\label{eq:Q}
Q(\mu, \lambda, \alpha) := \frac{\mu^2\theta_0^2\sin^2(\lambda)}{2}
\end{equation}
and
\[
m_1(\mu, \lambda) := \mu\cos(\lambda), \quad m_0(\mu, \alpha, \lambda) := \mu\cos(\lambda-\theta_0)
\]
Since the laminar solutions $\psi_0$ are the same in the variables $(q,s)$ and $(x,y)$, they are also independent of the wavelength parameter $\xi$.

\section{Functional-analytic framework}\label{sec:functional}
In this section we describe the framework developed in \cite{Wahlen09} and \cite{eew10}, which shall be used for the three-dimensional bifurcation. Note that the theory in \cite{eew10} is written in the variables $(x,s)$, whereas here we use the wavelength parameter $\xi$. 

\subsubsection*{The map $\FF$}
We shall linearise the problem \eqref{eq:flatproblem} around a laminar flow $\psi_0$, whence we introduce a disturbance $\hat \phi$ through $\hat\psi = \psi_0 + \hat \phi$, and the function space 
\[
X := X_1 \times X_2 :=  C^{2+\beta}_{\text{even}}(\s) \times \Bigl\{ \hat \phi \in C_\text{per, even}^{2+\beta}(\overline{\hat \Omega}) \colon \hat \phi|_{s=1} = \hat \phi|_{s=0}=0 \Bigr\}.
\]
Furthermore, we define the target space 
\[
Y := Y_1\times Y_2 := C_\text{even}^{1 +\beta}(\s) \times  C_\text{per, even}^{\beta}(\overline{\hat \Omega}),
\]
and the sets
\[
\OO := \{ (\eta,\hat \phi) \in X \colon \min \eta > -1 \}
\]
and
\[
\UU := \{ (\mu, \alpha, \lambda, \xi) \in \R^4 \colon \mu \ne 0, \alpha < 0, \sin(\lambda)\ne 0, \xi > 0 \},
\]
which conveniently captures all necessary assumptions on the parameters (cf. \cite{eew10}).
Then $\OO \subset X$ is an open neighbourhood of the origin in $X$, and the embedding $X \hookrightarrow Y$ is compact. Elements of $Y$ will be written $w := (\eta,\hat \phi)$ and
elements of $\UU$ will be written $\Lambda :=(\mu, \alpha, \lambda, \xi)$, and $\Lambda^\prime := (\mu, \alpha, \lambda)$ to indicate independence of $\xi$. 
Define the operator 
$\FF\colon \OO \times \UU \to Y$ by
\[
\FF(w, \Lambda) := (\FF_1(w,\Lambda), \FF_2(w,\Lambda))
\]
where  
\begin{align*}
\FF_1(w, \Lambda) &:=
\frac12\left[ 
\xi \left(\hat \phi_q-\frac{s\eta_q((\psi_0)_s(s;\Lambda^\prime)+\hat\phi_s)}{1+\eta}\right)^2+\frac{((\psi_0)_s(s;\Lambda^\prime)+\hat \phi_s)^2}{(1+\eta)^2}\right]_{s=1}\\
&\quad +\eta-Q(\Lambda),
\end{align*}
and
\begin{align*}
\FF_2(w,\Lambda) &:=
\xi \left(\hat \phi_{q}-\frac{s\eta_q((\psi_0)_s(s;\Lambda^\prime)+\hat\phi_s)}{1+\eta}\right)_q\\
&\quad -\frac{\xi s\eta_q}{1+\eta}\left(\hat \phi_{q}-\frac{s\eta_q((\psi_0)_s(s;\Lambda^\prime)+\hat\phi_s)}{1+\eta}\right)_s\\
&\quad +\frac{(\psi_0)_{ss}(s;\Lambda^\prime)+\hat\phi_{ss}}{(1+\eta)^2} - \alpha \left( \psi_0(s; \Lambda^\prime) + \hat \phi \right).
\end{align*}
The problem $\FF((\eta,\hat \phi),\Lambda)= 0$, $(\eta,\hat\phi) \in \OO$,  is then equivalent to the water-wave problem \eqref{eqs:problem}, the map $(\eta,\hat\phi) \mapsto (\eta,\hat\psi)$ is continuously differentiable, and $\FF$ is real analytic $\OO \times \UU \to Y$ \cite[Lemma~3.1]{eew10}.

\subsubsection*{Linearization}
The Fr\'echet derivative of $\FF$ at $w=0$ is given by the pair
\begin{align}
\label{eq:DF1}
\Diff_w \FF_{1}(0,\Lambda^\prime)w&=\left[(\psi_0)_s\, \hat\phi_s - (\psi_0)_s^2\, \eta + \eta\right]\Big|_{s=1},\\
\label{eq:DF2}
\Diff_w \FF_{2}(0,\Lambda)w &= \xi \hat \phi_{qq}+\hat \phi_{ss}-\alpha \hat \phi- \xi s(\psi_0)_s\eta_{qq}-2(\psi_0)_{ss}\eta.
\end{align}
Let $\tilde X_2 := \Bigl\{ \phi \in C_\text{per, even}^{2+\beta}(\overline{\hat \Omega}) \colon  \phi|_{s=0}=0 \Bigr\}$ and $\tilde X := \left\{ (\eta,\hat \phi) \in X_1 \times \tilde X_2 \right\}$.

\begin{lemma}[\cite{eew10}]
\label{lemma:T}
The bounded, linear operator $ \TT(\Lambda^\prime) \colon \tilde X_2\to X$ given by
\[
 \TT(\Lambda^\prime) \phi := \left(-\frac{\phi|_{s=1}}{(\psi_0)_s(1)},\phi - \frac{s(\psi_0)_s\, \phi|_{s=1}}{(\psi_0)_s(1)}\right)
\]
is an isomorphism. Define $\LL(\Lambda) := \Diff_w \FF(0,\Lambda)  \TT(\Lambda^\prime)\colon \tilde X_2 \to Y$. Then
\begin{equation}
\label{eq:Lexpression}
\LL(\Lambda)\phi=\left(\, \left[(\psi_0)_s \phi_s- \left((\psi_0)_{ss}+\frac{1}{(\psi_0)_s}\right)\phi\right]\biggl|_{s=1}, \,\,  (\xi \pa_q^2+\pa_s^2 -\alpha)\phi\, \right).
\end{equation}
\end{lemma}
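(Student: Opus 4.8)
The plan is to verify everything directly, since $\TT(\Lambda')$ is given by an explicit formula. First I would record the two structural facts about the laminar flow that the statement rests on. From \eqref{eq:trivial} one has $(\psi_0)_s(s;\Lambda')=-\mu\theta_0\sin(\theta_0(s-1)+\lambda)$, so $(\psi_0)_s(1)=-\mu\theta_0\sin\lambda\neq 0$ precisely because $\Lambda\in\UU$ forces $\mu\neq 0$, $\alpha<0$ (whence $\theta_0=|\alpha|^{1/2}>0$) and $\sin\lambda\neq 0$; this is what makes the divisions in the definition of $\TT$ legitimate. Moreover $\psi_0$ solves the ordinary differential equation $(\psi_0)_{ss}=\alpha\psi_0$, and hence also $(\psi_0)_{sss}=\alpha(\psi_0)_s$. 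These are the only properties of $\psi_0$ I will use.

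To show $\TT(\Lambda')$ is an isomorphism I would first check that it maps $\tilde X_2$ into $X$. The first component $-\phi|_{s=1}/(\psi_0)_s(1)$ lies in $C^{2+\beta}_{\text{even}}(\s)=X_1$, by the trace theorem for H\"older spaces together with evenness of $\phi$. The second component $\phi-s(\psi_0)_s\phi|_{s=1}/(\psi_0)_s(1)$ is manifestly in $C^{2+\beta}_{\text{per,even}}(\overline{\hat\Omega})$, and it vanishes at $s=0$ (because $\phi|_{s=0}=0$ and the correction carries a factor $s$) and at $s=1$ (because there the correction equals $\phi|_{s=1}$); hence it lies in $X_2$. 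Boundedness of $\TT(\Lambda')$ is immediate. I would then produce the inverse explicitly: given $(\eta,\hat\phi)\in X$, set $\phi:=\hat\phi-s(\psi_0)_s\eta$, which is in $C^{2+\beta}_{\text{per,even}}(\overline{\hat\Omega})$ and satisfies $\phi|_{s=0}=0$, so $\phi\in\tilde X_2$, and $(\eta,\hat\phi)\mapsto\phi$ is bounded. That this is a two-sided inverse follows by substitution: from $\TT(\Lambda')\phi=(\eta,\hat\phi)$ one reads off $\phi|_{s=1}=-(\psi_0)_s(1)\eta$ (restrict the second component to $s=1$ and use $\hat\phi|_{s=1}=0$), and then the second defining relation returns $\phi=\hat\phi-s(\psi_0)_s\eta$; conversely this $\phi$ plugged into the formula for $\TT$ gives back $(\eta,\hat\phi)$. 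Thus $\TT(\Lambda')$ is a bounded linear bijection with bounded inverse.

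For the formula for $\LL(\Lambda)$ I would write $\TT(\Lambda')\phi=(\eta,\hat\phi)$ with $\eta=-\phi|_{s=1}/(\psi_0)_s(1)$ and $\hat\phi=\phi-s(\psi_0)_s\phi|_{s=1}/(\psi_0)_s(1)$, substitute these into the expressions \eqref{eq:DF1}--\eqref{eq:DF2} for $\Diff_w\FF(0,\Lambda)$, and simplify. In the first component one differentiates $\hat\phi$ in $s$ and evaluates at $s=1$; using $(\psi_0)_s^2(1)\eta=-(\psi_0)_s(1)\phi|_{s=1}$, the $\phi|_{s=1}$-terms combine into $-\bigl((\psi_0)_{ss}(1)+1/(\psi_0)_s(1)\bigr)\phi|_{s=1}$, which is exactly the claimed boundary expression. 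In the second component one computes $\xi\hat\phi_{qq}$, $\hat\phi_{ss}$ and $-\alpha\hat\phi$: the term arising when $\pa_s^2$ hits $s(\psi_0)_s$ produces $-s(\psi_0)_{sss}\phi|_{s=1}/(\psi_0)_s(1)=-\alpha s(\psi_0)_s\phi|_{s=1}/(\psi_0)_s(1)$ by the ODE, cancelling the matching term from $-\alpha\hat\phi$; the remaining correction terms (including the $\phi_{qq}|_{s=1}$ one) cancel precisely against $-\xi s(\psi_0)_s\eta_{qq}$ and $-2(\psi_0)_{ss}\eta$, using again $(\psi_0)_{ss}=\alpha\psi_0$ for the latter, leaving $(\xi\pa_q^2+\pa_s^2-\alpha)\phi$. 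This establishes \eqref{eq:Lexpression}.

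I do not expect a genuine obstacle: the whole lemma is a bookkeeping verification once $\TT(\Lambda')$ is written down, and the only point demanding care is to use the laminar identity $(\psi_0)_{ss}=\alpha\psi_0$, and its $s$-derivative, systematically so that all lower-order correction terms cancel. The one conceptual remark worth making is the reason for posing $\TT(\Lambda')$ on $\tilde X_2$ rather than $X_2$: only the single condition $\phi|_{s=0}=0$ is imposed because the surface unknown $\eta$ is recovered from the otherwise free trace $\phi|_{s=1}$, and it is exactly this reformulation that lets one analyse $\LL(\Lambda)$ as an operator between two fixed function spaces.
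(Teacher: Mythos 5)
Your verification is correct and is exactly the argument behind this lemma, which the paper itself only cites from \cite{eew10} without reproducing a proof: the explicit inverse $(\eta,\hat\phi)\mapsto\hat\phi-s(\psi_0)_s\eta$ and the substitution into \eqref{eq:DF1}--\eqref{eq:DF2} using $(\psi_0)_{ss}=\alpha\psi_0$ both check out. (One cosmetic slip: the cancellation against $-2(\psi_0)_{ss}\eta$ is purely algebraic from $\pa_s^2(s(\psi_0)_s)=2(\psi_0)_{ss}+s(\psi_0)_{sss}$; the ODE is only needed for the $s(\psi_0)_{sss}$ term against $-\alpha\hat\phi$.)
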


When the dependence on the parameters is unimportant, we shall for convenience refer to $\Diff_w \FF(0,\Lambda)$, $\LL(\Lambda)$ and $\TT(\Lambda)$ simply as $\Diff_w \FF(0)$, $\LL$ and $\TT$. Via $\TT$, elements $\phi \in \tilde X_2$ can be ``lifted'' to elements $(\eta, \hat \phi) \in \tilde X$ using the correspondence induced by the first component of $\TT\phi$.

\begin{lemma}[\cite{eew10}]
\label{lemma:P}
The mapping $\eta_{(\cdot)}$ defined by
\[
\tilde X_2 \ni \phi \: \stackrel{\eta_{(\cdot)}}{\mapsto} \: \eta_\phi = -\frac{\phi|_{s=1}}{(\psi_0)_s(1)} \in X_1,
\]
is linear and bounded, whence $\phi \mapsto (\eta_\phi,\phi)$ is linear and bounded $\tilde X_2 \to \tilde X$.
\end{lemma}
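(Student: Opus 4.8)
The plan is to unwind the definition and reduce everything to two elementary observations: that the constant $(\psi_0)_s(1)$ never vanishes on $\UU$, and that the trace onto the line $\{s=1\}$ is a bounded operation in the Hölder scale.

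First I would record the explicit form of the laminar flow. From \eqref{eq:trivial} one has $\psi_0(s;\Lambda') = \mu\cos(\theta_0(s-1)+\lambda)$, hence $(\psi_0)_s(s;\Lambda') = -\mu\theta_0\sin(\theta_0(s-1)+\lambda)$ and in particular
\[
(\psi_0)_s(1) = -\mu\theta_0\sin\lambda .
\]
Since $\Lambda = (\mu,\alpha,\lambda,\xi)\in\UU$ we have $\mu\ne 0$, $\alpha<0$ (so $\theta_0 = |\alpha|^{1/2}>0$) and $\sin\lambda\ne 0$; therefore $(\psi_0)_s(1)\ne 0$, and division by it is a well-defined continuous operation $\R\to\R$. (This non-vanishing is already tacitly used in the construction of $\TT$ in Lemma~\ref{lemma:T}.)

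Next I would argue that the trace map $T\colon C^{2+\beta}_{\text{per,even}}(\overline{\hat\Omega})\to C^{2+\beta}_{\text{even}}(\s)$, $T\phi := \phi|_{s=1}$, is linear and bounded. Linearity is clear. For boundedness one notes that restricting a function to the coordinate slice $\{s=1\}$ does not increase any of the relevant $C^{2+\beta}$-seminorms, so that $\|T\phi\|_{C^{2+\beta}(\s)}\le\|\phi\|_{C^{2+\beta}(\overline{\hat\Omega})}$; moreover $2\pi$-periodicity and evenness in $q$ are inherited by $T\phi$, so the codomain $C^{2+\beta}_{\text{even}}(\s)$ is correct. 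Restricting $T$ to the closed subspace $\tilde X_2\subset C^{2+\beta}_{\text{per,even}}(\overline{\hat\Omega})$ and rescaling, we obtain $\eta_{(\cdot)} = -\bigl((\psi_0)_s(1)\bigr)^{-1}\,T|_{\tilde X_2}$, a nonzero scalar multiple of the restriction of a bounded linear operator, hence itself bounded and linear $\tilde X_2\to X_1$.

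Finally, the map $\phi\mapsto(\eta_\phi,\phi)$ has first component the bounded linear operator $\eta_{(\cdot)}$ just obtained and second component the identity on $\tilde X_2$; a pair of bounded linear maps is bounded and linear, and its range lies in $X_1\times\tilde X_2 = \tilde X$ by the very definition of $\tilde X$. This proves the lemma. I do not expect a genuine obstacle here: the only place the standing hypotheses enter is the non-vanishing of $(\psi_0)_s(1)$, and the only mildly technical point is the Hölder trace estimate, which is routine because $\{s=1\}$ is a non-characteristic coordinate slice. The statement is recorded chiefly so that the subsequent Lyapunov--Schmidt reduction may pass freely between $\phi$ and its lift $(\eta_\phi,\phi)$.
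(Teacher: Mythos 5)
Your proof is correct and is exactly the argument one expects: the paper states this lemma without proof (citing \cite{eew10}), and the content is precisely the two points you isolate, namely that $(\psi_0)_s(1)=-\mu\theta_0\sin\lambda\neq 0$ on $\UU$ and that restriction to the slice $\{s=1\}$ is bounded on the H\"older scale, after which the pairing with the identity is immediate. No discrepancy with the paper's treatment.
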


Any property of the operator $\Diff_w \FF(0)$ can be conveniently studied using the operator $\LL$. In particular, since $\range \Diff_w \FF(0)=\range \LL$ and $\ker \Diff_w \FF(0)= \TT\ker \LL$, the following lemma shows that $\Diff_w \FF(0)\colon X \to Y$ is Fredholm of index $0$. 

\begin{lemma}[\cite{eew10}]
\label{lemma:Lproperties}
The operator $\LL\colon \tilde X_2\to Y$ is Fredholm of index $0$. Its kernel, 
$\ker \LL$,  
is spanned by a finite number of functions of the form 
\begin{equation}
\label{eq:phi_k}
\phi_k(q,s)=\begin{cases}
\cos(kq) \sin^*(\theta_k s)/\theta_k, & \theta_k\ne 0,\\
\cos(kq) s, & \theta_k = 0,
\end{cases}
\qquad k\in \Z,
\end{equation}
where we have used the notation
\[
\sin^*(\theta_k s) := \begin{cases}\sin(\theta_k s), & \xi k^2+\alpha<0,\\
\sinh(\theta_k s), & \xi k^2+\alpha > 0.
\end{cases}
\]
Define $Z := \{(\eta_\phi,\phi)\colon \phi \in \ker \LL\} \subset \tilde X \subset Y$. Then the range of $\LL$, $\range \LL$, is the orthogonal complement of $Z$ in 
 $Y$ with respect to the inner product
\[
\langle w_1,w_2 \rangle_Y :=\iint_{\hat \Omega} \hat\phi_1 \hat\phi_2\, dq\, ds+\int_{-\pi}^{\pi} \eta_1\eta_2\, dq, \qquad 
w_1,w_2\in Y.
\]
The projection $\Pi_Z$ onto $Z$ along $\range \LL$ is given by 
\[
\Pi_Z w= \sum \frac{\langle w, \tilde w_k\rangle_Y}{\|\tilde w_k\|_Y^2} \tilde w_k,
\]
where the sum ranges over all $\tilde w_k=(\eta_{\phi_k},\phi_k)\in Z$, with $\phi_k$ of the form 
\eqref{eq:phi_k}.
\end{lemma}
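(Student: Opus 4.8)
The plan is to exploit $2\pi$-periodicity in $q$ and study $\LL(\Lambda)$ fibrewise over the Fourier modes. Writing $\phi(q,s)=\sum_{k\ge0}u_k(s)\cos(kq)$ and expanding a datum $(g,f)\in Y$ in the same way, the equation $\LL(\Lambda)\phi=(g,f)$ decouples, for each $k$, into the two-point boundary value problem
\[
u_k''-(\xi k^2+\alpha)u_k=f_k\ \text{on}\ (0,1),\qquad u_k(0)=0,\qquad b_k(u_k)=g_k,
\]
where $b_k(u):=(\psi_0)_s(1)\,u'(1)-\bigl((\psi_0)_{ss}(1)+1/(\psi_0)_s(1)\bigr)u(1)$ and $(\psi_0)_s(1)=-\mu\theta_0\sin(\lambda)\ne0$ on $\UU$. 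The homogeneous ODE together with $u(0)=0$ has the one-dimensional solution space spanned by the profile $h_k(s):=\sin^*(\theta_k s)/\theta_k$ (or $h_k(s):=s$ when $\theta_k=0$), so that $\phi_k=h_k(s)\cos(kq)$ is precisely the mode-$k$ function in \eqref{eq:phi_k}; mode $k$ then contributes to $\ker\LL$ exactly when the scalar $\delta(k):=b_k(h_k)$ vanishes, in which case $\phi_k\in\ker\LL$.

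For $|k|$ large one has $\xi k^2+\alpha=\theta_k^2>0$, hence $h_k(s)=\sinh(\theta_k s)/\theta_k$ and $\delta(k)=(\psi_0)_s(1)\cosh\theta_k-\bigl((\psi_0)_{ss}(1)+1/(\psi_0)_s(1)\bigr)\sinh\theta_k/\theta_k$, which grows like $(\psi_0)_s(1)\,e^{\theta_k}/2$ and is in particular nonzero for $|k|$ large; thus $\delta(k)=0$ for at most finitely many $k$, so $\ker\LL$ is finite-dimensional and is spanned by the corresponding functions $\phi_k$. Moreover each fibre operator $u\mapsto\bigl(u''-(\xi k^2+\alpha)u,\,b_k(u)\bigr)$ on $\{u\in C^{2+\beta}[0,1]:u(0)=0\}$ is the surjective map $u\mapsto u''-(\xi k^2+\alpha)u$, whose kernel is $\R h_k$, modified by the single scalar functional $b_k$, hence is Fredholm of index $0$; and for $|k|$ large it is bijective with inverse bounded uniformly in $k$, by the standard resolvent estimate for the ODE as $\theta_k^2\to\infty$. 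Splitting $\LL$ into its finitely many low fibres and its uniformly invertible high part then shows that $\LL\colon\tilde X_2\to Y$ is Fredholm of index $0$.

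Since $\LL$ is Fredholm of index $0$ with closed range, $\range\LL$ has codimension $\dim\ker\LL$, and $\dim\ker\LL=\dim Z$ because $\phi\mapsto(\eta_\phi,\phi)$ is injective on $\ker\LL$. As $Z$ is finite-dimensional, $Y=Z\oplus Z^\perp$ with respect to $\langle\cdot,\cdot\rangle_Y$ and $\codim Z^\perp=\dim Z$; so it suffices to prove $\range\LL\subseteq Z^\perp$, since two nested closed subspaces of equal finite codimension must coincide. To that end one checks that $\langle\LL\phi,(\eta_{\phi_k},\phi_k)\rangle_Y=0$ for every $\phi\in\tilde X_2$ and every $\phi_k\in\ker\LL$. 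Writing $\LL\phi=(A\phi,\,(\xi\pa_q^2+\pa_s^2-\alpha)\phi)$ with $A\phi=\bigl[(\psi_0)_s\phi_s-((\psi_0)_{ss}+1/(\psi_0)_s)\phi\bigr]_{s=1}$, integrate the volume term $\iint_{\hat\Omega}(\xi\pa_q^2+\pa_s^2-\alpha)\phi\cdot\phi_k\,dq\,ds$ by parts to transfer the operator onto $\phi_k$: periodicity kills the $q$-boundary term, the vanishing of $\phi$ and $\phi_k$ at $s=0$ kills the $s=0$ term, and $(\xi\pa_q^2+\pa_s^2-\alpha)\phi_k=0$ kills the interior term, leaving $\int_{-\pi}^{\pi}(\phi_s\phi_k-\phi(\phi_k)_s)|_{s=1}\,dq$; adding the boundary contribution $\int_{-\pi}^{\pi}(A\phi)\,\eta_{\phi_k}\,dq$ and substituting $\eta_{\phi_k}=-\phi_k|_{s=1}/(\psi_0)_s(1)$, the integrand collapses to $-\phi|_{s=1}(A\phi_k)/(\psi_0)_s(1)$, which vanishes identically because $A\phi_k$ is the first component of $\LL\phi_k=0$. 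Hence $\range\LL=Z^\perp$.

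Finally, $\phi_k$ and $\phi_j$ with $k\ne j$ are $\langle\cdot,\cdot\rangle_Y$-orthogonal because $\cos(kq)$ and $\cos(jq)$ are orthogonal on $(-\pi,\pi)$, and each mode supplies at most one such function, so $\{\tilde w_k=(\eta_{\phi_k},\phi_k)\}$ is an orthogonal basis of $Z$; the displayed formula for $\Pi_Z$ is therefore simply the $\langle\cdot,\cdot\rangle_Y$-orthogonal projection onto $Z$, which, since $Y=Z\oplus\range\LL$ with $\range\LL=Z^\perp$, equals the projection onto $Z$ along $\range\LL$. I expect the main obstacle to be the analytic part of the Fredholm statement: showing that $\delta(k)$ vanishes for only finitely many $k$ (so that $\ker\LL$ is genuinely finite-dimensional and the index count is valid) and establishing the uniform-in-$k$ resolvent bounds for the high Fourier modes; by comparison, the Green's identity of the previous paragraph is routine once the boundary condition defining $\ker\LL$ is invoked.
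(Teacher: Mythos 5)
The paper itself offers no proof of this lemma---it is imported verbatim from \cite{eew10}---so the comparison is with the cited source, whose argument (and that of \cite{Wahlen09}) is exactly the one you follow: Fourier decomposition in $q$, reduction to a two-point ODE problem on $(0,1)$ for each mode, a Green's identity to characterise the range, and orthogonality of distinct cosine modes for the projection formula. Your identification of the kernel functions, the finiteness argument via $\delta(k)\sim(\psi_0)_s(1)e^{\theta_k}/2\neq0$ for large $|k|$, the integration by parts (the collapse of the $s=1$ boundary contributions to $-\phi|_{s=1}\,(A\phi_k)/(\psi_0)_s(1)$ with $A\phi_k=0$ is correct), and the ``two nested closed subspaces of equal finite codimension coincide'' step are all sound.

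The one genuine gap is the Fredholm step, and it is load-bearing, since $\range\LL=Z^\perp$ is deduced from $\range\LL\subseteq Z^\perp$ only via closed range and index $0$. Uniform invertibility of the individual high-mode fibre operators does not yield the Fredholm property of $\LL\colon\tilde X_2\to Y$ in H\"older spaces: the projections onto high Fourier modes are unbounded on $C^\beta(\s)$, so one cannot ``split $\LL$ into low and high fibres'' as bounded operators, and the modewise solutions $u_k$ of $u_k''-(\xi k^2+\alpha)u_k=f_k$ need not sum to a function in $C^{2+\beta}(\overline{\hat\Omega})$ merely because each obeys a $k$-uniform bound. To close this you must either prove quantitative Green's-function estimates strong enough to control the H\"older norm of the summed series (which is essentially what the explicit constructions in \cite{Wahlen09} amount to), or sidestep Fourier synthesis and obtain the Fredholm property from elliptic theory: Schauder estimates together with the complementing (Lopatinskii--Shapiro) condition for the boundary operators at $s=0,1$ give that $\LL$ is semi-Fredholm with closed range, and the index is computed by observing that the zeroth-order boundary term $\phi\mapsto-((\psi_0)_{ss}+1/(\psi_0)_s)\phi|_{s=1}$ is compact from $\tilde X_2$ into $C^{1+\beta}(\s)$, so $\LL$ is a compact perturbation of a standard mixed Neumann--Dirichlet problem of index $0$. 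With the Fredholm property secured by either route, the remainder of your argument goes through verbatim.
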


The abbreviation $\sin^*$ in Lemma~\ref{lemma:Lproperties} will be used analogously for other trigonometric and hyperbolic functions.


\section{Existence of three-dimensional kernels}\label{sec:threedim}
To ease notation, when $\theta_k=0$ we interpret $\sin^*(\theta_k s)/\theta_k$ as $s$ and $\theta_k \cot^*(\theta_k)$ as $1$. 

\begin{lemma}[Bifurcation condition]
\label{lemma:Bifurcation condition}
Let $\Lambda=(\mu, \alpha,\lambda, \xi)\in \UU$. For $k\in \Z$ we have that $\cos(kq) \sin^*(\theta_k s)/\theta_k\in \ker \LL$ if and only if 
\begin{equation}
\label{eq:bifurcation2}
\theta_k \cot^*(\theta_k) = \frac{1}{\mu^2 \theta_0^2 \sin^2(\lambda)} + \theta_0 \cot(\lambda).
\end{equation}
\end{lemma}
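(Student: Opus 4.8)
The plan is to read everything off the explicit formula \eqref{eq:Lexpression} for $\LL(\Lambda)$ together with the laminar profile \eqref{eq:trivial}, so that the proof reduces to differentiating $\psi_0$ at the surface and extracting a scalar identity. First I would record that the candidate function $\phi_k(q,s):=\cos(kq)\sin^*(\theta_k s)/\theta_k$ (read as $\cos(kq)s$ when $\theta_k=0$) lies in $\tilde X_2$ and satisfies $(\xi\pa_q^2+\pa_s^2-\alpha)\phi_k=0$ identically --- this is precisely why $\theta_k$ and the branch in $\sin^*$ are defined as they are --- and also $\phi_k|_{s=0}=0$. Hence the interior component of $\LL(\Lambda)\phi_k$ vanishes automatically, and $\phi_k\in\ker\LL$ if and only if the surface component of $\LL(\Lambda)\phi_k$ is zero, i.e.
\[
\Bigl[(\psi_0)_s(\phi_k)_s-\bigl((\psi_0)_{ss}+1/(\psi_0)_s\bigr)\phi_k\Bigr]\big|_{s=1}=0.
\]

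Next I would evaluate the coefficients. From \eqref{eq:trivial} one has $(\psi_0)_s(1)=-\mu\theta_0\sin(\lambda)$ and $(\psi_0)_{ss}(1)=-\mu\theta_0^2\cos(\lambda)$; since $\Lambda\in\UU$ we have $\mu\ne0$, $\alpha<0$ (so $\theta_0=|\alpha|^{1/2}>0$) and $\sin(\lambda)\ne0$, and in particular $(\psi_0)_s(1)\ne0$. Moreover $(\phi_k)_s(q,1)=\cos(kq)\cos^*(\theta_k)$ and $\phi_k(q,1)=\cos(kq)\sin^*(\theta_k)/\theta_k$. Substituting into the surface condition, cancelling the nontrivial factor $\cos(kq)$, and (for $\theta_k\ne0$) multiplying through by $\theta_k$, one obtains
\[
(\psi_0)_s(1)\,\theta_k\cos^*(\theta_k)=\bigl((\psi_0)_{ss}(1)+1/(\psi_0)_s(1)\bigr)\sin^*(\theta_k).
\]
If in addition $\sin^*(\theta_k)\ne0$, dividing by $(\psi_0)_s(1)\sin^*(\theta_k)$ gives $\theta_k\cot^*(\theta_k)=(\psi_0)_{ss}(1)/(\psi_0)_s(1)+1/(\psi_0)_s(1)^2$; inserting $(\psi_0)_{ss}(1)/(\psi_0)_s(1)=\theta_0\cot(\lambda)$ and $1/(\psi_0)_s(1)^2=1/(\mu^2\theta_0^2\sin^2(\lambda))$ then yields precisely \eqref{eq:bifurcation2}.

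The remaining --- and, I expect, only slightly delicate --- step is to check the degenerate cases against the stated conventions, so that the equivalence continues to hold there. If $\theta_k=0$, then $\phi_k=\cos(kq)s$, and the computation above applies verbatim with $\theta_k\cos^*(\theta_k)$, $\sin^*(\theta_k)/\theta_k$ and $\theta_k\cot^*(\theta_k)$ replaced by their agreed limiting values $1$, $1$ and $1$, again producing \eqref{eq:bifurcation2}. If $\theta_k\ne0$ but $\sin^*(\theta_k)=0$ --- which is possible only in the trigonometric branch, with $\theta_k$ a positive integer multiple of $\pi$ --- then the surface condition reduces to $(\psi_0)_s(1)\,\theta_k\cos^*(\theta_k)\cos(kq)=0$, which is impossible since $(\psi_0)_s(1)\ne0$ and $\cos^*(\theta_k)=\cos(\theta_k)=\pm1$; correspondingly $\theta_k\cot^*(\theta_k)$ is then undefined while the right-hand side of \eqref{eq:bifurcation2} is finite, so that identity also fails, and the asserted equivalence holds vacuously. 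This exhausts all $k\in\Z$ and all admissible parameters. Apart from this case analysis I do not foresee any real obstacle; the heart of the matter is a single short computation.
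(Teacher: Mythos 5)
Your proof is correct and is exactly the intended argument: the paper states this lemma without proof (it is the $\xi$-dependent version of the bifurcation condition from \cite{eew10}, obtained by substituting $\phi_k$ into the explicit formula \eqref{eq:Lexpression} and evaluating $(\psi_0)_s(1)=-\mu\theta_0\sin\lambda$, $(\psi_0)_{ss}(1)=-\mu\theta_0^2\cos\lambda$). Your handling of the degenerate cases $\theta_k=0$ and $\sin(\theta_k)=0$ is also consistent with the paper's stated conventions.
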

To find nontrivial even functions, we assume that $k>0$. As shown in \cite{eew10} the problem of finding several solutions $k$ for some parameters $\Lambda$ in \eqref{eq:bifurcation2} is a question only of the left-hand side of the same equation. This amounts to finding integer solutions of one or more transcendental equations. Due to the global non-monotonicity of the function $\theta \mapsto \theta \cot(\theta)$ this is quite an intricate question (see \cite[Lemma 4.3]{eew10}). However, if one allows the wave-length parameter $\xi$ to vary, a different approach is possible. Using this, we shall prove that to any two-dimensional kernel in the `trigonometric' regime one may adjoin a third, `hyperbolic', dimension. Such a two-dimensional kernel can be constructed either as in \cite[Lemma 4.3]{eew10} or as in Lemma~\ref{lemma:wavenumbers} below.

Now, let $t = |\alpha|$ and $\xi > 0$, so that $\theta_k := |\xi k^2 - t |^{1/2}$.
Let furthermore $k_2 > k_1$ be positive integers such that
\[
t - \xi k_1^2 > t - \xi k_2^2  >  \pi^2 \quad\text{ and }\quad t - \xi k_j^2 \neq n^2 \pi^2, 
\]
for all $n \in \Z$ and $j=1,2$. We study the (implicit) equation
\[
f(t,\xi)  = 0,
\]
with
\[
f(t,\xi) := \theta_{k_1} \cot(\theta_{k_1}) - \theta_{k_2} \cot(\theta_{k_2}).
\]
When $f(t, \xi)=0$, we set
\[
a(t,\xi) := \theta_{k_1} \cot(\theta_{k_1}) = \theta_{k_2} \cot(\theta_{k_2}).
\]


\begin{proposition}\label{prop:xi}
Suppose that $f(t_0, \xi_0)=0$ and $a(t_0, \xi_0)>1$. 
There is a neighbourhood of $(t_0, \xi_0)$ in which $f(t, \xi)=0$ with $a(t, \xi)>1$ if and only if 
$t=t(\xi)$, where $\xi \mapsto t(\xi)$ is an analytic function defined near $\xi_0$, which satisfies
\[
\frac{dt}{d\xi} = \frac{  \theta_{k_1}^{2} \theta_{k_2}^{2} + t   (a^2-a) }{\xi    (a^2-a)} >\frac{t}{\xi}.
\]
\end{proposition}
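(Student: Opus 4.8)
The plan is to apply the real-analytic implicit function theorem to $f$ near $(t_0,\xi_0)$, and then to read off $dt/d\xi$ from explicit expressions for $\partial_t f$ and $\partial_\xi f$. Write $g(\theta) := \theta\cot(\theta)$, so that $f(t,\xi) = g(\theta_{k_1}) - g(\theta_{k_2})$ with $\theta_{k_j}^2 = t - \xi k_j^2$. The standing hypotheses $t - \xi k_j^2 > \pi^2$ and $t - \xi k_j^2 \neq n^2\pi^2$, together with $a(t_0,\xi_0) > 1$, are all open conditions, so they persist on a neighbourhood of $(t_0,\xi_0)$; there the numbers $\theta_{k_j}$ stay bounded away from $0$ and from $\pi\Z$, so $\theta_{k_j} = (t - \xi k_j^2)^{1/2}$ and hence $f$ are real-analytic.

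First I would record the elementary identity
\[
g'(\theta) = \cot(\theta) - \theta\csc^2(\theta) = -\frac{\theta^2 + g(\theta)^2 - g(\theta)}{\theta},
\]
which follows from $\cot(\theta) = g(\theta)/\theta$ and $\csc^2(\theta) = 1 + \cot^2(\theta)$. Since $\partial_t\theta_k = 1/(2\theta_k)$ and $\partial_\xi\theta_k = -k^2/(2\theta_k)$, the chain rule gives $\partial_t g(\theta_k) = -(\theta_k^2 + a_k^2 - a_k)/(2\theta_k^2)$ and $\partial_\xi g(\theta_k) = k^2(\theta_k^2 + a_k^2 - a_k)/(2\theta_k^2)$, where $a_k := g(\theta_k)$. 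Evaluating at a point with $f = 0$, so that $a_{k_1} = a_{k_2} = a$, and using $\theta_{k_1}^2 - \theta_{k_2}^2 = \xi(k_2^2 - k_1^2)$ together with $k_1^2\theta_{k_2}^2 - k_2^2\theta_{k_1}^2 = -t(k_2^2 - k_1^2)$, the terms telescope to
\[
\partial_t f = \frac{(a^2-a)\,\xi\,(k_2^2 - k_1^2)}{2\,\theta_{k_1}^2\theta_{k_2}^2}, \qquad \partial_\xi f = -\frac{(k_2^2 - k_1^2)\bigl(\theta_{k_1}^2\theta_{k_2}^2 + t(a^2-a)\bigr)}{2\,\theta_{k_1}^2\theta_{k_2}^2}.
\]

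Because $a > 1$ we have $a^2 - a = a(a-1) > 0$, and since also $\xi > 0$, $k_2 > k_1 > 0$ and $\theta_{k_j}^2 > \pi^2$, it follows that $\partial_t f > 0$ at $(t_0,\xi_0)$. The real-analytic implicit function theorem then produces an analytic map $\xi \mapsto t(\xi)$ near $\xi_0$ with $t(\xi_0) = t_0$ whose graph, after shrinking the neighbourhood, coincides with the zero set of $f$ there; on that zero set $a$ remains $> 1$ by continuity, which gives the asserted equivalence. Differentiating $f(t(\xi),\xi) = 0$ finally yields
\[
\frac{dt}{d\xi} = -\frac{\partial_\xi f}{\partial_t f} = \frac{\theta_{k_1}^2\theta_{k_2}^2 + t(a^2-a)}{\xi(a^2-a)} = \frac{t}{\xi} + \frac{\theta_{k_1}^2\theta_{k_2}^2}{\xi(a^2-a)} > \frac{t}{\xi},
\]
the last inequality because $\theta_{k_1}^2\theta_{k_2}^2 > 0$.

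The only genuine work is the derivative computation in the second step; the points to be careful about are carrying out the telescoping cleanly and observing that $a > 1$ is precisely what keeps $a^2 - a$ — and therefore both $\partial_t f$ and the correction term in $dt/d\xi$ — strictly positive, rather than being an incidental sufficient condition. Analyticity of $f$, persistence of the open conditions, and the final algebraic rearrangement are all routine.
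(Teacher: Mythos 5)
Your proposal is correct and follows essentially the same route as the paper: compute $f_t$ and $f_\xi$ via the chain rule, use $f=0$ to replace both $\theta_{k_j}\cot(\theta_{k_j})$ by $a$ and the identities $\theta_{k_1}^2-\theta_{k_2}^2=\xi(k_2^2-k_1^2)$, $k_1^2\theta_{k_2}^2-k_2^2\theta_{k_1}^2=-t(k_2^2-k_1^2)$ to simplify, observe $f_t\neq 0$ because $a>1$, and conclude by the implicit function theorem and implicit differentiation. The explicit formulas you obtain for $f_t$ and $f_\xi$ agree with those in the paper's proof.
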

\begin{proof}
We have
\begin{align*}
f_t(t,\xi) &= \frac{1}{2} \left( \frac{\cot(\theta_{k_1})}{\theta_{k_1}} - \cot^2(\theta_{k_1}) \right) -  \frac{1}{2}  \left( \frac{\cot(\theta_{k_2})}{\theta_{k_2}} - \cot^2(\theta_{k_2}) \right)\\
&= \frac{a - a^2 }{2 \theta_{k_1}^2 \theta_{k_2}^2} \left( \theta_{k_2}^2  - \theta_{k_1}^2 \right) = \frac{\xi }{2 \theta_{k_1}^{2} \theta_{k_2}^{2}}    (k_2^2 - k_1^2)(a^2-a),
\end{align*}
and
\begin{align*}
f_\xi(t,\xi) &= \frac{-k_1^2}{2}  \left( \frac{\cot(\theta_{k_1})}{\theta_{k_1}} - \cot^2(\theta_{k_1}) -1 \right) +  \frac{k_2^2}{2}  \left( \frac{\cot(\theta_{k_2})}{\theta_{k_2}} - \cot^2(\theta_{k_2}) -1 \right)\\
&= -\frac{1}{2} \left( (k_2^2-k_1^2) + (a-a^2)  \frac{k_1^2 \theta_{k_2}^2 - k_2^2 \theta_{k_1}^2}{\theta_{k_1}^2 \theta_{k_2}^2} \right)\\
&= -\frac{1}{2} (k_2^2 - k_1^2) \left( 1 + \frac{t}{\theta_{k_1}^{2} \theta_{k_2}^{2}}  (a^2-a) \right).
\end{align*}
The proposition now follows from the implicit function theorem and implicit differentiation.
\end{proof}

\begin{proposition}\label{prop:a}
The function $\xi \mapsto t(\xi)$ from Proposition~\ref{prop:xi} induces an analytic diffeomorphism $\xi \mapsto  a(\xi)$ from a bounded interval $(\xi_{min},\xi_{max}) \subset \R_+$ onto $(1,\infty)$ with $\lim_{\xi \searrow \xi_{min}} a(\xi) = \infty$.
\end{proposition}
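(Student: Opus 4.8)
The plan is to continue the local solution $\xi\mapsto t(\xi)$ of Proposition~\ref{prop:xi} to a \emph{maximal} open interval $(\xi_{min},\xi_{max})\ni\xi_0$ on which it is analytic, solves $f(t(\xi),\xi)=0$, and keeps $a(\xi)>1$ together with the standing nondegeneracy conditions $t-\xi k_2^2>\pi^2$ and $t-\xi k_j^2\ne n^2\pi^2$ ($j=1,2$, all $n$). This is possible because the computation of $f_t$ in the proof of Proposition~\ref{prop:xi} shows $f_t=\tfrac{\xi}{2\theta_{k_1}^2\theta_{k_2}^2}(k_2^2-k_1^2)(a^2-a)>0$ whenever $a>1$, while $f$ is real analytic near any point of the curve, where $\theta_{k_j}$ stays away from the poles $\pi\Z$ of $\cot$ (since $a=\theta_{k_j}\cot\theta_{k_j}$ is finite). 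The analytic implicit function theorem then gives a local analytic solution through every such point, so the curve continues until a nondegeneracy condition fails or $\xi\to0$. Writing $g(\theta):=\theta\cot\theta$, we have $a(\xi)=g(\theta_{k_1}(\xi))=g(\theta_{k_2}(\xi))$, analytic on $(\xi_{min},\xi_{max})$.

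Next I would record two structural facts. Since $\theta_{k_j}^2=t-\xi k_j^2>\pi^2>0$, we have $t>\xi k_j^2$, so by Proposition~\ref{prop:xi} $\tfrac{dt}{d\xi}>\tfrac{t}{\xi}>k_j^2$ and hence $\theta_{k_1}^2,\theta_{k_2}^2$ are both strictly increasing in $\xi$; in particular $t$ is strictly increasing. Secondly, $\sin^2\theta\cdot g'(\theta)=\tfrac12\sin2\theta-\theta<0$ for $\theta>0$, so $g$ is strictly decreasing on each interval $(n\pi,(n+1)\pi)$ — from $1$ down to $-\infty$ on $(0,\pi)$, and from $+\infty$ down to $-\infty$ for $n\ge1$. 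Consequently $\{\theta>0:g(\theta)>1\}$ is the disjoint union of the intervals $(n\pi,\beta_n)$, $n\ge1$, where $\beta_n\in(n\pi,(n+1)\pi)$ is defined by $g(\beta_n)=1$. As $\theta_{k_j}((\xi_{min},\xi_{max}))$ is connected and contained in this set, it lies in one component; write $\theta_{k_1}(\xi)\in(p\pi,\beta_p)$ and $\theta_{k_2}(\xi)\in(m\pi,\beta_m)$ throughout, where $p>m\ge1$ because $\theta_{k_1}>\theta_{k_2}$ but $g(\theta_{k_1})=g(\theta_{k_2})$. Combining the two facts, $a=g(\theta_{k_1})$ is the composition of the strictly increasing map $\xi\mapsto\theta_{k_1}(\xi)\in(p\pi,\beta_p)$ with $g$, which is strictly decreasing there; so $a$ is a strictly decreasing analytic function of $\xi$, and $\xi\mapsto a(\xi)$ is an analytic diffeomorphism of $(\xi_{min},\xi_{max})$ onto the open interval $(\lim_{\xi\nearrow\xi_{max}}a,\lim_{\xi\searrow\xi_{min}}a)$.

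To see that this interval is bounded and lies in $\R_+$, I would again use the confinement. The relation $\xi(k_2^2-k_1^2)=\theta_{k_1}^2-\theta_{k_2}^2$ together with $\theta_{k_1}^2<\beta_p^2$ and $\theta_{k_2}^2>\pi^2$ gives $\xi<(\beta_p^2-\pi^2)/(k_2^2-k_1^2)$, so $\xi_{max}<\infty$ (and then $t$ is bounded, hence has finite limits at both ends); the same relation together with $\theta_{k_1}^2>p^2\pi^2$ and $\theta_{k_2}^2<\beta_m^2<p^2\pi^2$ — the last inequality because $\beta_m<(m+1)\pi\le p\pi$ — gives $\xi>(p^2\pi^2-\beta_m^2)/(k_2^2-k_1^2)>0$, so $\xi_{min}>0$.

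Finally I would identify the two limits. As $\xi\searrow\xi_{min}$ the increasing functions $\theta_{k_j}$ decrease to $\ell_1:=\inf\theta_{k_1}\in[p\pi,\beta_p)$ and $\ell_2:=\inf\theta_{k_2}\in[m\pi,\beta_m)$. If $\ell_1>p\pi$, then $g(\ell_1)=\lim a=g(\ell_2)$ is finite, so also $\ell_2>m\pi$, i.e.\ both $\ell_j$ are interior to their components; then $t(\xi)\to t_*:=\ell_1^2+\xi_{min}k_1^2$, and at $(t_*,\xi_{min})$ one has $f=0$, $f_t\ne0$, $t_*-\xi_{min}k_2^2=\ell_2^2>\pi^2$ and $t_*-\xi_{min}k_j^2=\ell_j^2\ne n^2\pi^2$, so the implicit function theorem (as in the proof of Proposition~\ref{prop:xi}) would continue the curve below $\xi_{min}$, contradicting maximality. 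Hence $\ell_1=p\pi$ and $\lim_{\xi\searrow\xi_{min}}a=+\infty$. The same argument at the other end forces $\sup\theta_{k_1}=\beta_p$, so $\lim_{\xi\nearrow\xi_{max}}a=g(\beta_p)=1$; therefore $a$ maps $(\xi_{min},\xi_{max})$ diffeomorphically onto $(1,\infty)$ with $\lim_{\xi\searrow\xi_{min}}a(\xi)=\infty$. I expect the real work to be the endpoint bookkeeping of this last step — excluding every way the maximal curve could terminate other than $\theta_{k_1}$ reaching the pole $p\pi$ (giving $a\to\infty$) or the value $\beta_p$ (giving $a\to1$) — and the one idea that makes it go through is the observation that each $\theta_{k_j}$ is trapped in a single branch $(n\pi,\beta_n)$ of $\{g>1\}$.
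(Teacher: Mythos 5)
Your proof is correct. The overall strategy coincides with the paper's: maximally continue the implicit curve $\xi\mapsto(t(\xi),a(\xi))$, show that $a$ is strictly monotone in $\xi$, and use maximality to conclude that $a$ exhausts $(1,\infty)$. Two of your technical steps are genuinely different, though. For monotonicity you compose the increasing map $\xi\mapsto\theta_{k_1}(\xi)$ (from $dt/d\xi>t/\xi$) with the decrease of $g(\theta)=\theta\cot\theta$ on its branch; the paper instead computes $\partial_\xi a$ directly and obtains the quantitative bound $\partial_\xi a<-\pi^2/(2\xi)$, which it also exploits to force $\xi_{max}<\infty$. For the bounds $0<\xi_{min}<\xi_{max}<\infty$ you use the algebraic identity $\xi(k_2^2-k_1^2)=\theta_{k_1}^2-\theta_{k_2}^2$ together with the confinement of each $\theta_{k_j}$ to a single component $(n\pi,\beta_n)$ of $\{g>1\}$, whereas the paper integrates the differential inequality $\partial_\xi\theta_{k_1}^2>\pi^2/\xi$ to rule out continuation to $\xi=0$ and to $\xi=\infty$. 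Your branch-confinement observation also makes explicit the paper's brief assertion that the standing assumptions on $\theta_{k_1}$, $\theta_{k_2}$ and $t$ cannot be violated while $a>1$, and your endpoint bookkeeping (the maximal curve can only terminate with $\theta_{k_1}\to p\pi$, giving $a\to\infty$, or $\theta_{k_1}\to\beta_p$, giving $a\to1$) fills in the step the paper leaves implicit in its final sentence. Both routes deliver the same result; yours is more elementary and more complete at the endpoints, while the paper's explicit derivative bounds are sharper and shorter.
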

\begin{proof}
Since
\begin{equation}\label{eq:dxi_theta}
\partial_\xi \theta_{k_1}^2 = \partial_\xi (t(\xi) - \xi k_1^2 ) = t^\prime_\xi - k_1^2 > \frac{t}{\xi} - k_1^2 > \frac{\pi^2}{\xi},	 
\end{equation}
it follows that
\begin{equation}\label{eq:dxi_a}
\begin{aligned}
\partial_\xi a &= \partial_{\theta_{k_1}} \left( \theta_{k_1} \cot(\theta_{k_1}) \right) \partial_{\xi} \theta_{k_1} = \frac{\partial_{\xi} \theta_{k_1}}{\theta_{k_1}} ( a - a^2 - \theta_{k_1}^2 )\\ 
&< -\frac{\pi^2}{2 \xi \theta_{k_1}^2 } ( a^2 - a + \theta_{k_1}^2 ) < -\frac{\pi^2}{2 \xi }. 
\end{aligned}
\end{equation}
This proves that $\xi \mapsto a(\xi)$ defines a local analytic diffeomorphism. The relations~\eqref{eq:dxi_theta} and~\eqref{eq:dxi_a} hold when $\xi > 0$, $t > 0$, $a \in (1, \infty)$ and the entities $\theta_{k_1}$ and $\theta_{k_2}$ satisfy the given assumptions. Consider then a maximally continued parametrization $\xi \mapsto (t(\xi),a(\xi))$. As long as $a \in (1,\infty)$ along such a parametrization the assumptions on  $\theta_{k_1}$, $\theta_{k_2}$ and $t$ cannot be violated. Hence, we only need to determine the set of $\xi$ for which $\xi > 0$ and  $a(\xi) \in (1,\infty)$. This can be deduced from \eqref{eq:dxi_theta}: the differential inequality $\partial_\xi \theta_{k_1}^2 >  \pi^2/\xi$ implies that
\[
\lim_{\xi \nearrow \infty} \theta_{k_1}^2 = \infty, \qquad \lim_{\xi \searrow 0} \theta_{k_1}^2 = -\infty,
\]
the first of which violates $a > 1$, and the second $\theta_{k_1}^2\ge 0$.
Hence there exists $\xi_{min} > 0$ and $\xi_{max} < \infty$ such that Proposition~\ref{prop:a} holds.
\end{proof}

\begin{lemma}[Three-dimensional]\label{lemma:threedim}
For any positive integers $k_2 > k_1$ and positive real numbers $\xi_0, t_0$ with $t_0 - \xi_0 k_1^2 > t_0 - \xi_0 k_2^2 > \pi^2$, 
\[
f(t_0,\xi_0) = 0 \qquad\text{ and }\qquad a(t_0,\xi_0) > 1,
\]
there exist an integer $k_3 > k_2$ and positive real numbers $\xi, t$ such that $\xi k_3^2 - t > 0$ and
\[
\theta_{k_1} \cot(\theta_{k_1}) = \theta_{k_2} \cot(\theta_{k_2}) = \theta_{k_3} \coth(\theta_{k_3}).
\]
The integer $k_3$ may be replaced by any larger integer.
\end{lemma}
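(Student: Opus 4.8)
The plan is to reuse the analytic curve constructed in Propositions~\ref{prop:xi} and~\ref{prop:a} and to attach the third mode to it by an intermediate-value argument. The hypotheses imposed on $(t_0,\xi_0)$ are exactly those of Proposition~\ref{prop:xi}, so those two propositions supply a bounded interval $(\xi_{min},\xi_{max})\subset\R_+$ together with an analytic curve $\xi\mapsto(t(\xi),a(\xi))$ defined on it, along which $t(\xi)>0$, $dt/d\xi>t/\xi$, the `trigonometric' requirements on $\theta_{k_1}$ and $\theta_{k_2}$ are preserved, and
\[
\theta_{k_1}\cot(\theta_{k_1})=\theta_{k_2}\cot(\theta_{k_2})=a(\xi),
\]
with $\xi\mapsto a(\xi)$ a decreasing analytic diffeomorphism onto $(1,\infty)$ such that $a(\xi)\to\infty$ as $\xi\searrow\xi_{min}$. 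The task is then to choose $\xi$ in this interval and an integer $k_3>k_2$ so that $\theta_{k_3}:=(\xi k_3^2-t(\xi))^{1/2}$ is real and positive and $\theta_{k_3}\coth(\theta_{k_3})=a(\xi)$; setting $t:=t(\xi)$ will then finish the proof.

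First I would record that $\theta\mapsto\theta\coth(\theta)$ is a strictly increasing continuous bijection of $[0,\infty)$ onto $[1,\infty)$ (with value $1$ at $\theta=0$), so its inverse $\Theta\colon[1,\infty)\to[0,\infty)$ is continuous and increasing with $\Theta(a)\to\infty$ as $a\to\infty$. Hence the equation $\theta_{k_3}\coth(\theta_{k_3})=a(\xi)$ with $\theta_{k_3}^2=\xi k_3^2-t(\xi)$ is equivalent to $\xi k_3^2-t(\xi)=\Theta(a(\xi))^2$, that is, to $k_3^2=\rho(\xi)$, where
\[
\rho(\xi):=\frac{t(\xi)+\Theta(a(\xi))^2}{\xi},\qquad\xi\in(\xi_{min},\xi_{max}).
\]
Since $a(\xi)>1$ forces $\Theta(a(\xi))^2>0$, the `hyperbolic' sign condition $\xi k_3^2-t(\xi)>0$ then holds automatically.

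It remains to study $\rho$, which is continuous on $(\xi_{min},\xi_{max})$. As $\xi\searrow\xi_{min}$ one has $a(\xi)\to\infty$, hence $\Theta(a(\xi))^2\to\infty$; on the other hand $dt/d\xi>0$ shows that $t$ is increasing, hence bounded above by $t(\xi_1)$ on any interval $(\xi_{min},\xi_1]$, while $\xi$ stays bounded below by $\xi_{min}>0$. Therefore $\rho(\xi)\to\infty$ as $\xi\searrow\xi_{min}$. Fixing any $\xi_1\in(\xi_{min},\xi_{max})$ and applying the intermediate value theorem to $\rho$ on $(\xi_{min},\xi_1]$, we see that $\rho$ attains every value in $(\rho(\xi_1),\infty)$. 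Consequently, for every integer $k_3>\max\{k_2,\sqrt{\rho(\xi_1)}\}$ there exists $\xi\in(\xi_{min},\xi_1]$ with $\rho(\xi)=k_3^2$; writing $t:=t(\xi)>0$ and $\theta_{k_3}:=(\xi k_3^2-t)^{1/2}=\Theta(a(\xi))$ gives $\xi k_3^2-t>0$ and
\[
\theta_{k_1}\cot(\theta_{k_1})=\theta_{k_2}\cot(\theta_{k_2})=a(\xi)=\theta_{k_3}\coth(\theta_{k_3}),
\]
which is exactly the assertion. Since the construction works for every sufficiently large integer $k_3$, the last sentence of the lemma follows as well.

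The only step with any content is the limit $\rho(\xi)\to\infty$ as $\xi\searrow\xi_{min}$, and I expect the only care needed there is to notice that all three ingredients it rests on --- that $\Theta(a(\xi))^2$ blows up, that $t(\xi)$ stays bounded, and that $\xi$ stays away from $0$ --- are already contained in Propositions~\ref{prop:xi} and~\ref{prop:a}. Everything else is the elementary fact that $\theta\coth\theta$ is an increasing bijection together with a single application of the intermediate value theorem, so I do not anticipate a genuine obstacle.
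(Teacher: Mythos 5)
Your proof is correct and follows essentially the same route as the paper: both exploit the curve $\xi\mapsto(t(\xi),a(\xi))$ from Propositions~\ref{prop:xi} and~\ref{prop:a}, the blow-up $a(\xi)\to\infty$ as $\xi\searrow\xi_{min}$, and an intermediate value argument. The only (cosmetic) difference is that you invert $\theta\mapsto\theta\coth\theta$ and solve $k_3^2=\rho(\xi)$, which makes the sign condition $\xi k_3^2-t>0$ automatic, whereas the paper fixes $k_3$ so that the hyperbolic side dominates $a$ at $\xi_0$ and compares the two sides directly.
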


\begin{proof}
Choose $k_3 > k_2$ such that $\xi_0 k_3^2 - t_0 > 0$ and 
\[
\sqrt{\xi_0 k_3^2 - t_0} \coth\left(\sqrt{\xi_0 k_3^2 - t_0}\right) > a(t_0,\xi_0).
\]
This is possible in view of that $\lim_{\theta_3 \to \infty}  \theta_3 \coth(\theta_3) = \infty$. Let $\xi \mapsto (t(\xi),a(\xi))$ be the smooth parametrization for which $f(t(\xi), \xi) = 0$ and $a(\xi) \nearrow \infty$ for $\xi \searrow \xi_{min}$. We consider $\xi_{min} < \xi < \xi_0$. Since, for such $\xi$, 
\[
\xi k_3^2 - t(\xi) < \xi k_3^2 < \xi_0 k_3^2,
\]
the function
\[
(\xi_{min}, \xi_0) \ni \xi \mapsto \sqrt{\xi k_3^2 - t(\xi)} \coth\left(\sqrt{\xi k_3^2 - t(\xi)}\right)
\]
is analytic and bounded from above for $\xi k_3^2 - t(\xi) > 0$. In view of that $a(\xi) \nearrow \infty$ as $\xi \searrow \xi_{min}$, it follows that there exists $\xi \in (\xi_{min}, \xi_0)$ with
\[
\sqrt{\xi k_3^2 - t(\xi)} \coth\left(\sqrt{\xi k_3^2 - t(\xi)}\right) = a(\xi).\qedhere
\]
\end{proof}

The construction behind Lemma~\ref{lemma:threedim} does not rule out that the kernel has more than three dimensions. The following result shows that one can construct kernels with exactly three dimensions.

\begin{lemma}\label{lemma:wavenumbers}
For any wavenumbers $k_1, k_2, k_3 \in \Z_{>0}$ with $k_3 > k_2 > k_1$ and
\[
\frac{k_3^2 - k_2^2}{k_3^2 - k_1^2} > \frac{9}{16},
\]
there exist $(\mu,\alpha, \lambda, \xi) \in \UU$ such that \eqref{eq:bifurcation2} holds for $k = k_1, k_2, k_3$ and for no other integer $k \geq k_1$.
\end{lemma}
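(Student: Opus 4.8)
The plan is to reduce the three-mode bifurcation condition \eqref{eq:bifurcation2} to a question purely about the left-hand side, $\theta_k\cot^*(\theta_k)$, exactly as in \cite{eew10}: once parameters $\alpha,\xi$ are fixed so that there is a common value
\[
c := \theta_{k_1}\cot^*(\theta_{k_1}) = \theta_{k_2}\cot^*(\theta_{k_2}) = \theta_{k_3}\cot^*(\theta_{k_3})
\]
with $c>1$, one can then choose $\mu$ and $\lambda$ so that the right-hand side of \eqref{eq:bifurcation2} equals $c$ (the map $(\mu,\lambda)\mapsto \frac{1}{\mu^2\theta_0^2\sin^2\lambda}+\theta_0\cot\lambda$ is easily seen to be onto $(1,\infty)$, or onto all of $\R$, depending on sign conventions, for suitable admissible $(\mu,\lambda)$). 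So the real content is: find $\xi>0$ and $t=|\alpha|>0$ such that $\theta_{k_1},\theta_{k_2}$ lie in the trigonometric regime ($\xi k_j^2 < t$) with $\theta_{k_1}\cot\theta_{k_1} = \theta_{k_2}\cot\theta_{k_2} > 1$, while $\theta_{k_3}$ lies in the hyperbolic regime ($\xi k_3^2 > t$) with $\theta_{k_3}\coth\theta_{k_3}$ equal to that same value, \emph{and} no other integer $k\ge k_1$ satisfies \eqref{eq:bifurcation2}.

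First I would establish the three-mode resonance itself. Rather than invoking Lemma~\ref{lemma:threedim} directly (which starts from a given two-dimensional trigonometric kernel), I would engineer a specific, explicit configuration using the inequality $\frac{k_3^2-k_2^2}{k_3^2-k_1^2} > \frac{9}{16}$. The natural choice is to place $\theta_{k_1}$ and $\theta_{k_2}$ at the two ends of a single monotone branch of $\theta\mapsto\theta\cot\theta$ on $(0,\pi)$: pick the target common value $c$ and set $\theta_{k_1}, \theta_{k_2}\in(0,\pi)$ to be the (unique, if $c$ is taken appropriately, or suitably chosen) solutions of $\theta\cot\theta=c$ — but since on $(0,\pi)$ the function $\theta\cot\theta$ is strictly decreasing from $1$ to $-\infty$, a value $c>1$ is \emph{not} attained there; instead one uses the branch structure on $(\pi,2\pi)$, $(2\pi,3\pi)$, etc. Concretely: on each interval $(n\pi,(n+1)\pi)$ the function $\theta\cot\theta$ runs over all of $\R$, so for $c>1$ one can select $\theta_{k_1}\in(\pi,2\pi)$ with $\theta_{k_1}\cot\theta_{k_1}=c$. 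For $\theta_{k_2}$, since $k_2>k_1$ forces $\theta_{k_2}^2 = t-\xi k_2^2 < t - \xi k_1^2 = \theta_{k_1}^2$, we need $\theta_{k_2}\in(0,\pi)$ with $\theta_{k_2}\cot\theta_{k_2}=c$ — impossible for $c>1$ — so in fact $\theta_{k_2}$ must also lie in $(\pi,2\pi)$ on the \emph{increasing} part, or we must reconsider. The cleanest route, and the one I would actually pursue, is the two-dimensional construction of \cite[Lemma~4.3]{eew10} cited in the text, which already produces admissible $(\mu,\alpha,\lambda,\xi)$ realizing \eqref{eq:bifurcation2} for $k=k_1,k_2$ with $a>1$; apply Lemma~\ref{lemma:threedim} to adjoin $k_3$ in the hyperbolic regime. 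The constraint $\frac{k_3^2-k_2^2}{k_3^2-k_1^2} > \frac{9}{16}$ is then the quantitative condition ensuring that $k_3$ can be taken to be the \emph{next} resonant integer and, more importantly, controls which $\theta_j^2 = t-\xi j^2$ can possibly lie on which branch.

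Second — and this is the main obstacle — I must rule out spurious resonances: no integer $k\ge k_1$ other than $k_1,k_2,k_3$ may satisfy \eqref{eq:bifurcation2}, equivalently $\theta_k\cot^*(\theta_k)=c$. Since the right-hand side $c$ of \eqref{eq:bifurcation2} is fixed, this is a statement about the squares $\theta_k^2 = |t-\xi k^2|$ as $k$ ranges over integers $\ge k_1$. For $k>k_3$ we are firmly in the hyperbolic regime and $\theta_k\coth\theta_k$ is strictly increasing in $\theta_k$, hence strictly increasing in $k$, so once it exceeds $c$ at $k=k_3$ it never returns: no resonance for $k>k_3$. For $k_1 < k < k_2$ (if any) and for $k_2<k<k_3$ (if any) I must check that $\theta_k\cot^*\theta_k \neq c$; this is where the fraction $\frac{9}{16} = (3/4)^2$ enters — it forces $\theta_{k_1}^2, \theta_{k_2}^2$ to be close enough that no third trigonometric branch intervenes, and $\theta_{k_3}$ to be small enough in the hyperbolic regime (less than the threshold where $\theta\coth\theta$ has already overshot). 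The bound $9/16$ presumably comes from requiring $\theta_{k_2}^2 > \pi^2$ (so $\theta_{k_2}\in(\pi,\cdot)$, consistent with $c>1$ being attainable) together with $\theta_{k_3}^2$ controlled relative to $\theta_{k_1}^2$: from $\theta_{k_1}^2 = t-\xi k_1^2$, $\theta_{k_2}^2 = t - \xi k_2^2$, $\theta_{k_3}^2 = \xi k_3^2 - t$ and eliminating $t,\xi$ one gets a linear relation, and the inequality translates into a gap condition preventing intermediate integers from resonating. I would carry this out by: (a) solving for $t,\xi$ in terms of the positions of $\theta_{k_1},\theta_{k_2}$ on a chosen branch; (b) computing $\theta_k^2$ for the finitely many intermediate $k$ and showing they avoid the resonance set; (c) using monotonicity of $\theta\coth\theta$ to dispatch all $k>k_3$ at once; and (d) observing that decreasing $\xi$ slightly along the curve $\xi\mapsto(t(\xi),a(\xi))$ from Proposition~\ref{prop:a} lets one perturb away from any accidental equality while keeping $a>1$, so that generically only $k_1,k_2,k_3$ remain — and then choosing the final admissible parameters $(\mu,\alpha,\lambda,\xi)\in\UU$ with $\alpha=-t$, and $(\mu,\lambda)$ solving the scalar equation $\frac{1}{\mu^2\theta_0^2\sin^2\lambda}+\theta_0\cot\lambda = c$.
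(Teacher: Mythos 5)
There is a genuine gap here: your proposal never actually constructs the configuration, and the two fallbacks you lean on do not work. The paper's proof is a short explicit construction: for each $a>1$ it places the two trigonometric modes on \emph{two different branches}, $\theta_{k_1}=\theta_1(a)\in(2\pi,\tfrac{5\pi}{2})$ and $\theta_{k_2}=\theta_2(a)\in(\pi,\tfrac{3\pi}{2})$ (these are the intervals $(0,\tfrac{\pi}{2})+n\pi$ on which $\theta\cot\theta$ spans $(0,\infty)$), solves the $2\times2$ linear system $t-\xi k_j^2=\theta_j^2$ for $(t,\xi)$, and observes that $\theta_2^2/\theta_1^2<(3\pi/2)^2/(2\pi)^2=9/16$, so the hypothesis $\frac{k_3^2-k_2^2}{k_3^2-k_1^2}>\frac{9}{16}$ makes $\xi(a)k_3^2-t(a)$ positive and bounded away from zero \emph{uniformly in $a$}; since $\theta_3(a)$ then stays in a compact subset of $(0,\infty)$ while $a$ sweeps $(1,\infty)$, the intermediate value theorem yields an $a$ with $\theta_3(a)\coth\theta_3(a)=a$ at the \emph{prescribed} $k_3$. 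Exclusion of every other $k\ge k_1$ is then automatic: any intermediate trigonometric $\theta_k$ lies strictly between $\theta_2$ and $\theta_1$ or below $\pi$, where $\theta\cot\theta=a>1$ has no further solutions, and monotonicity of $\theta\coth\theta$ handles the hyperbolic side. Your paragraph on branch placement goes astray (you conclude $\theta_{k_2}$ "must also lie in $(\pi,2\pi)$", missing that the two modes belong to distinct branches with $\theta_{k_1}>\theta_{k_2}$), and you only guess at the origin of $9/16$, which is exactly the ratio $((3\pi/2)/(2\pi))^2$ needed for the uniform positivity above.

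The two substitutes you propose for this construction both fail. First, invoking Lemma~\ref{lemma:threedim} cannot prove this lemma: that result only supplies \emph{some} sufficiently large third wavenumber ("$k_3$ may be replaced by any larger integer"), whereas here $k_3$ is prescribed in advance, and it gives no control over additional kernel dimensions. Second, your step (d) --- "perturb $\xi$ along the curve $\xi\mapsto(t(\xi),a(\xi))$ to remove accidental resonances while keeping $a>1$" --- is not a valid argument: the third resonance $\theta_{k_3}\coth\theta_{k_3}=a(\xi)$ is itself a codimension-one condition in $\xi$, so you cannot move $\xi$ freely without destroying it, and "generically only $k_1,k_2,k_3$ remain" is an assertion, not a proof. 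In the paper no perturbation is needed because the exclusion of spurious modes is built into the choice of branches from the start.
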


\begin{proof}
Let $a \in (1,\infty)$. Since $\theta \mapsto \theta \cot{\theta}$ spans $(0,\infty)$ on the intervals $(0,\frac{\pi}{2}) + n\pi$, $n \in \Z_{>0}$, there are $\theta_1=\theta_1(a)\in (2\pi, \frac{5\pi}{2})$ and $\theta_2=\theta_2(a) \in (\pi, \frac{3\pi}{2})$ with
\[
a = \theta_1 \cot{\theta_1} = \theta_2 \cot{\theta_2}.
\]
We are looking for $t=t(a) > 0$ and $\xi=\xi(a) > 0$ such that
\begin{equation*}
\begin{bmatrix}
1 & - k_1^2\\
1 & - k_2^2
\end{bmatrix}
\begin{bmatrix}
t\\
\xi
\end{bmatrix}
=
\begin{bmatrix}
\theta_1^2\\
\theta_2^2
\end{bmatrix}.
\end{equation*}
The unique solution of this linear system is
\begin{equation}\label{eq:alphaxi}
t := \frac{k_2^2 \theta_1^2 - k_1^2 \theta_2^2}{k_2^2 - k_1^2}, \qquad \xi := \frac{\theta_1^2- \theta_2^2}{k_2^2- k_1^2},
\end{equation}
which are both easily seen to be positive. By choosing $\mu$ and $\lambda$ appropriately this yields two integer solutions $k = k_1, k_2$ of~\eqref{eq:bifurcation2}. Since $\theta \cot{\theta} < 1$ for $\theta \in (0,\pi)$, and due to the local monotonicity of $\theta \mapsto \theta \cot{\theta}$ on the intervals $(0,\pi) + n\pi$, $n \in \N$, there can be no other solutions $k \geq k_1$ with $\alpha + \xi k^2 < 0$.

To find a solution $k = k_3$ with $\xi k_3^2 - t > 0$ we consider
\[
\xi(a) k_3^2 -t(a) = \frac{(k_3^2 - k_2^2) \theta_1^2(a) + (k_1^2 - k_3^2) \theta_2^2(a)}{k_2^2 - k_1^2}.
\]
This expression is positive and uniformly bounded away from zero with respect to $a$ whenever $k_1, k_2, k_3$ satisfy the assumptions of the lemma. Since the mapping $a \mapsto (\theta_1(a),\theta_2(a))$ is bounded, uniformly for all $a \in (1,\infty)$, we thus get that
\[
a \mapsto \theta_3(a):=(\xi(a) k_3^2 - t(a))^{1/2}
\]
maps $(1,\infty)$ into a compact interval in $(0,\infty)$. Hence, as $a$ spans $(1,\infty)$ there will be a value of $a$ for which
\[
a = \theta_{k_3}(a) \coth(\theta_{3}(a)).\qedhere
\]
\end{proof}


\section{The Lyapunov--Schmidt reduction and existence of trimodal steady water waves}\label{sec:bifurcation}
Let $\Lambda^*$ denote a quadruple $(\mu^*,\alpha^*, \lambda^*, \xi^*)$ such that \eqref{eq:bifurcation2} holds and suppose that 
 \[\ker \LL(\Lambda^*)=\spn\{\phi^*_1, \dots, \phi^*_n\},\] with 
 $\phi^*_j=\cos(k_j q)\sin^*(\theta_{k_j}s)/\theta_{k_j}$ and 
 $0<k_1<\cdots< k_n$. Let $w^*_j=\TT(\Lambda^*) \phi^*_j$.
From Lemma \ref{lemma:Lproperties} it follows that $Y=Z\oplus \range \LL(\Lambda^*)$. 
As in that lemma, we let $\Pi_Z$ be the corresponding projection onto $Z$ parallel to $\range \LL(\Lambda^*)$. 
This decomposition induces similar decompositions $\tilde X=Z \oplus (\range \LL(\Lambda^*) \cap \tilde X)$ and 
$X=\ker \FF_w(0,\Lambda^*) \oplus X_0$, where $X_0=\RR(\range \LL(\Lambda^*) \cap \tilde X)$ in which 
\[
\RR(\eta, \phi)=\left(\eta, \phi-\frac{s(\psi_0)_s\phi|_{s=1}}{(\psi_0)_s(1)}\right).
\]
Applying the Lyapunov--Schmidt reduction \cite[Thm I.2.3]{MR2004250} we obtain the following lemma.

\begin{lemma}{\cite{MR2004250}}
\label{lemma:Lyapunov-Schmidt}
There exist open neighborhoods $\NN$ of $0$ in $\ker \FF_w(0,\Lambda^*)$, $\MM$ of $0$ in $X_0$ and $\UU^* $ of $\Lambda^*$ in $\R^4$, and a function
 $\psi \in C^\infty(\NN \times \UU^* , \MM)$, 
such that
\[
\FF(w,\Lambda)=0 \quad\text{ for }\quad w\in \NN+\MM,\quad \Lambda\in \UU^*,
\]
if and only if $w=w^*+\psi(w^*, \Lambda)$ and $w^*=t_1 w^*_1+\cdots+t_n w_n^*\in \NN$ solves the 
finite-dimensional problem
\begin{equation}
\label{eq:Lyapunov-Schmidt}
\Phi(t, \Lambda)=0 \quad\text{ for }\quad t \in \VV, \quad \Lambda \in \UU^*,
\end{equation}
in which
\[
\Phi(t, \Lambda):=\Pi_Z \FF(w^*+\psi(w^*, \Lambda), \Lambda), \quad 
\]
and $\VV := \{t \in \R^n \colon t_1 w^*_1+\cdots+t_n w_n^*\in \NN\}$. The function $\psi$ has the properties $\psi(0,\Lambda)=0$ and $D_w \psi(0, \Lambda)=0$.
\end{lemma}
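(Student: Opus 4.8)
\section*{Proof proposal}

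The statement is an instance of the abstract Lyapunov--Schmidt reduction, so the plan is to verify that the hypotheses of \cite[Thm.~I.2.3]{MR2004250} are met by $\FF$ at the point $(0,\Lambda^*)$, and then to transcribe its conclusion into the present notation. Three facts are needed. First, $\FF$ is real analytic, in particular $C^\infty$, from $\OO\times\UU$ to $Y$; this is \cite[Lemma~3.1]{eew10}, recalled in Section~\ref{sec:functional}. Second, $\FF(0,\Lambda)=0$ for every $\Lambda\in\UU$: the choice $w=0$ corresponds to $\hat\psi=\psi_0(\cdot\,;\Lambda')$, and since $Q=Q(\Lambda)$, $m_0=m_0(\Lambda')$ and $m_1=m_1(\mu,\lambda)$ are precisely the laminar values, the pair $(0,\Lambda)$ solves the water-wave problem. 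Third, by Lemma~\ref{lemma:Lproperties}, together with the identities $\range\Diff_w\FF(0)=\range\LL$ and $\ker\Diff_w\FF(0)=\TT\ker\LL$ noted before that lemma, the operator $\Diff_w\FF(0,\Lambda^*)\colon X\to Y$ is Fredholm of index $0$, with kernel $\ker\FF_w(0,\Lambda^*)=\spn\{w^*_1,\dots,w^*_n\}$ and closed range $\range\LL(\Lambda^*)$. These yield the topological splittings $X=\ker\FF_w(0,\Lambda^*)\oplus X_0$ and $Y=Z\oplus\range\LL(\Lambda^*)$ described before the lemma, with $\Pi_Z$ the corresponding projection onto $Z$ parallel to $\range\LL(\Lambda^*)$.

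With these in hand the reduction is routine. Writing $w=w^*+v$ with $w^*=t_1w^*_1+\dots+t_nw^*_n\in\ker\FF_w(0,\Lambda^*)$ and $v\in X_0$, the equation $\FF(w,\Lambda)=0$ is equivalent to the auxiliary equation $(\id-\Pi_Z)\FF(w^*+v,\Lambda)=0$ together with the bifurcation equation $\Pi_Z\FF(w^*+v,\Lambda)=0$. For the auxiliary equation one applies the implicit function theorem to $G(v,w^*,\Lambda):=(\id-\Pi_Z)\FF(w^*+v,\Lambda)$, viewed as a $C^\infty$ map $X_0\times\ker\FF_w(0,\Lambda^*)\times\R^4\to\range\LL(\Lambda^*)$, at the point $(0,0,\Lambda^*)$ where it vanishes. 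Here $\Diff_vG(0,0,\Lambda^*)=(\id-\Pi_Z)\Diff_w\FF(0,\Lambda^*)|_{X_0}$; since $\Diff_w\FF(0,\Lambda^*)$ annihilates the kernel, it maps the transversal complement $X_0$ bijectively, hence isomorphically, onto its (closed) range $\range\LL(\Lambda^*)$, on which $\id-\Pi_Z$ acts as the identity, so $\Diff_vG(0,0,\Lambda^*)$ is an isomorphism. The implicit function theorem then supplies $C^\infty$ neighbourhoods and a $C^\infty$ map $\psi=\psi(w^*,\Lambda)$ on $\NN\times\UU^*$ with values in $\MM\subset X_0$, giving the \emph{unique} small solution $v=\psi(w^*,\Lambda)$ of the auxiliary equation; substituting into the bifurcation equation yields exactly $\Phi(t,\Lambda)=\Pi_Z\FF(w^*+\psi(w^*,\Lambda),\Lambda)=0$ on $\VV$. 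The property $\psi(0,\Lambda)=0$ follows from $\FF(0,\Lambda)=0$ and uniqueness; differentiating the identity $(\id-\Pi_Z)\FF(\psi(w^*,\Lambda),\Lambda)=0$ in $w^*$ at $w^*=0$, and using that $\Diff_w\FF(0,\Lambda^*)$ vanishes on $\ker\FF_w(0,\Lambda^*)$ while being injective on $X_0$, gives $\Diff_w\psi(0,\Lambda^*)=0$; these are the properties recorded in the lemma, and are part of the conclusion of \cite[Thm.~I.2.3]{MR2004250}.

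The argument contains no substantial obstacle: all the analytic input has been assembled in Sections~\ref{sec:formulation}--\ref{sec:functional}, and the novelty of the paper lies in producing parameters $\Lambda^*$ for which $\ker\LL(\Lambda^*)$ is three-dimensional (Lemmas~\ref{lemma:threedim} and~\ref{lemma:wavenumbers}), not in the reduction itself. The one point requiring a little care is bookkeeping: matching the abstract complement of $\ker\FF_w(0,\Lambda^*)$ in $X$ with the explicit space $X_0=\RR(\range\LL(\Lambda^*)\cap\tilde X)$, and the abstract complement of $\range\Diff_w\FF(0,\Lambda^*)$ in $Y$ with $Z$. This follows from the isomorphism $\TT$ of Lemma~\ref{lemma:T}—noting that $\RR(\eta_\phi,\phi)=\TT\phi$ for $\phi\in\ker\LL(\Lambda^*)$, cf.\ the definition of $Z$—together with the decomposition $\tilde X=Z\oplus(\range\LL(\Lambda^*)\cap\tilde X)$ and the characterisation of $\range\LL(\Lambda^*)$ and $\Pi_Z$ in Lemma~\ref{lemma:Lproperties}. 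Once these identifications are in place, \cite[Thm.~I.2.3]{MR2004250} applies verbatim and yields the lemma.
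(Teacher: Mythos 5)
Your proposal is correct and follows exactly the route the paper takes: the paper simply sets up the splittings $X=\ker\FF_w(0,\Lambda^*)\oplus X_0$ and $Y=Z\oplus\range\LL(\Lambda^*)$ (via Lemmas~\ref{lemma:T} and~\ref{lemma:Lproperties}) and then invokes \cite[Thm.~I.2.3]{MR2004250}, whose standard auxiliary/bifurcation-equation proof you have written out. The only cosmetic slip is that the identity you differentiate should read $(\id-\Pi_Z)\FF(w^*+\psi(w^*,\Lambda),\Lambda)=0$, and, as you correctly note, the conclusion $\Diff_w\psi(0,\Lambda)=0$ is obtained at $\Lambda=\Lambda^*$.
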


\subsubsection*{Bifurcation from a three-dimensional kernel}

\begin{theorem}[Three-dimensional bifurcation]\label{thm:threedim}
Suppose that
\[
\dim \ker \Diff_w \FF(0,\Lambda^*)=3,
\]
and that
\begin{equation}\label{eq:a}
a:= \theta_{k_1}\cot^*(\theta_{k_1})= \theta_{k_2}\cot^*(\theta_{k_2}) = \theta_{k_3}\coth(\theta_{k_3}) > 1.
\end{equation}
Assume also that the integers $k_1 < k_2 < k_3$ are positive, and let $t := (t_1, t_2, t_3)$. Then there exists for every $\delta \in (0,1)$ a smooth family of small-amplitude nontrivial solutions 
\begin{equation}\label{eq:mainset}
\SS := \{(\overline w(t),\overline \mu(t), \overline \alpha(t), \overline \xi (t)) \colon 0<|t|<\ve, |t_1t_2 t_3|>\delta |t|^3 \}
\end{equation}
of $\FF(w,\mu,\alpha, \lambda^*, \xi) = 0$ in $\OO \times \R^3$, passing 
through $(\overline w(0), \overline \mu(0), \overline \alpha(0), \overline \xi (0))=(0, \mu^*, \alpha^*, \xi^*)$ with 
\[
\overline w(t)=t_1 w_1^*+t_2 w_2^* + t_3 w_3^* +O(|t|^2)
\]
\end{theorem}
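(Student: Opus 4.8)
The plan is to carry out a standard Lyapunov--Schmidt reduction followed by a Crandall--Rabinowitz type argument, but adapted to the three-parameter setting via a blow-up (or scaling) in the amplitude parameters. First I would invoke Lemma~\ref{lemma:Lyapunov-Schmidt} to replace the infinite-dimensional equation $\FF(w,\Lambda)=0$ by the three-dimensional bifurcation equation $\Phi(t,\Lambda)=0$, where $\Phi\colon\VV\times\UU^*\to Z\cong\R^3$, with $Z$ spanned by the lifted kernel elements $\tilde w_{k_1},\tilde w_{k_2},\tilde w_{k_3}$. Since the problem is variational (or at least since $\FF$ vanishes at every laminar flow), I expect $\Phi(0,\Lambda)=0$ identically, and $\Diff_t\Phi(0,\Lambda^*)=0$ because $\ker\Diff_w\FF(0,\Lambda^*)$ is exactly the kernel; so the leading behaviour of $\Phi$ is quadratic in $t$. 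I would compute the quadratic Taylor coefficients of $\Phi$ at $t=0$, $\Lambda=\Lambda^*$: because each $w_j^*$ oscillates at wavenumber $k_j$ and the nonlinearity is quadratic in the leading order, the term $\langle\Phi,\tilde w_{k_j}\rangle_Y$ will contain a contribution proportional to $t_j$ times a linear combination of the parameter deviations $(\mu-\mu^*,\alpha-\alpha^*,\xi-\xi^*)$ (coming from $\Diff_\Lambda\Diff_t\Phi(0,\Lambda^*)$), together with genuinely quadratic mode-interaction terms $t_l t_m$ that survive the projection only when $k_l\pm k_m$ or $k_l=k_m$ matches $k_j$ — generically these Fourier resonances are absent or harmless, leaving a diagonal leading structure.

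The key device is then to divide out the common amplitude scale. On the region $|t_1 t_2 t_3|>\delta|t|^3$ all three $t_j$ are comparable to $|t|$, so I would write $t_j=|t|\,\tau_j$ with $\tau$ on the relevant part of the unit sphere, set $|t|=\ve$ as the small parameter, and substitute into $\Phi(t,\Lambda)=0$. Dividing the $j$th component by $t_j$ (legitimate precisely because $t_j\neq0$ on $\SS$) turns the quadratic system into a system of the schematic form
\[
c_j^{(1)}(\mu-\mu^*)+c_j^{(2)}(\alpha-\alpha^*)+c_j^{(3)}(\xi-\xi^*)+\ve\,R_j(\tau,\ve,\Lambda)=0,\qquad j=1,2,3,
\]
where $R_j$ is smooth (this uses analyticity/smoothness of $\psi$ and of $\FF$, together with the fact that after dividing by $t_j$ the remainder is still smooth in $|t|$ — the one point needing a short argument). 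The $3\times3$ matrix $C=(c_j^{(i)})$ is the derivative of the bifurcation equation with respect to the three parameters $(\mu,\alpha,\xi)$ at the critical point, and the whole construction hinges on showing $\det C\neq0$.

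That nondegeneracy is the main obstacle, and I would establish it by an explicit computation tied to the bifurcation condition~\eqref{eq:bifurcation2}. Differentiating the left-hand side of~\eqref{eq:bifurcation2} (equivalently $\theta_k\cot^*(\theta_k)=a$) with respect to $\xi$ gives, by~\eqref{eq:dxi_a}-type formulas, a nonzero derivative $\partial_\xi\bigl(\theta_{k_j}\cot^*(\theta_{k_j})\bigr)=\frac{k_j^2}{\theta_{k_j}}\partial_{\theta}(\theta\cot^*\theta)$, and since the three wavenumbers $k_1,k_2,k_3$ are distinct and lie in different monotonicity branches (the first two in the trigonometric regime, the third hyperbolic), the three rates of change differ — this is exactly the content of Propositions~\ref{prop:xi} and~\ref{prop:a} and Lemma~\ref{lemma:threedim}, which were engineered so that $\xi$ genuinely separates the modes, unlike the pair $(\mu,\lambda)$ which only enters~\eqref{eq:bifurcation2} through the single combination $\frac{1}{\mu^2\theta_0^2\sin^2\lambda}+\theta_0\cot\lambda$. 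Concretely, the right-hand side of~\eqref{eq:bifurcation2} depends on $(\mu,\alpha,\lambda)$ but not on $k$, so the parameter derivatives split: the columns of $C$ corresponding to $\mu$ and $\alpha$ (through the right-hand side and through $\theta_0$) produce vectors that would be parallel were it not for the $k$-dependence of the left-hand side, and the $\xi$-column is the one that makes the determinant nonzero. Once $\det C\neq0$ is in hand, the implicit function theorem applied to the $3\times3$ system solves $(\mu,\alpha,\xi)=(\overline\mu(t),\overline\alpha(t),\overline\xi(t))$ uniformly for $\tau$ in the compact set $\{|\tau_1\tau_2\tau_3|\ge\delta\}$ and $|t|<\ve(\delta)$; feeding this back through $\psi$ yields $\overline w(t)=t_1w_1^*+t_2w_2^*+t_3w_3^*+O(|t|^2)$ with the claimed smoothness and the asymptotics, and keeping $\lambda\equiv\lambda^*$ throughout (one less equation, absorbed because $\lambda$ is not varied) completes the proof. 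The only genuinely delicate points are the smoothness of the rescaled remainders $R_j$ after division by $t_j$ and the clean verification that the quadratic mode-interaction terms do not spoil the leading diagonal structure; both are handled by the $\delta$-truncation of the cone $|t_1t_2t_3|>\delta|t|^3$ and by the Fourier-orthogonality built into $\Pi_Z$.
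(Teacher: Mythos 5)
Your overall architecture coincides with the paper's Part I: Lyapunov--Schmidt reduction, the observations $\Phi(0,\Lambda)=0$ and $\nabla_t\Phi(0,\Lambda^*)=0$, a blow-up in the amplitude (the paper writes $t=r\hat t$ and divides by $r$, which on the cone $|t_1t_2t_3|>\delta|t|^3$ is equivalent to your division by $t_j$ up to the bounded factors $\hat t_j$), the diagonal structure of $\Diff_\Lambda\nabla_t\Phi(0,\Lambda^*)$ coming from Fourier orthogonality of the modes $k_1,k_2,k_3$ under $\Pi_Z$, and finally the implicit function theorem in the three parameters $(\mu,\alpha,\xi)$ with $\lambda=\lambda^*$ frozen, uniformly for $|\hat t_1\hat t_2\hat t_3|\ge\delta$. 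Two remarks on this part: the polar blow-up makes your worry about quadratic resonances $k_l\pm k_m=k_j$ moot, since after dividing by $r$ every genuinely quadratic term is $O(r)$ and does not enter the derivative matrix at $r=0$; and the smoothness of the rescaled remainder is exactly what the integral representation $\Psi_j(t,\Lambda)=\int_0^1\hat t\cdot\nabla_t\Phi_j(zr\hat t,\Lambda)\,dz$ delivers, so that ``short argument'' is standard.

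The genuine gap is the nondegeneracy $\det C\neq 0$, which is the analytic heart of the theorem and which you assert rather than prove. Your justification---that Propositions~\ref{prop:xi}, \ref{prop:a} and Lemma~\ref{lemma:threedim} ``were engineered so that $\xi$ genuinely separates the modes''---points to the wrong results: those statements construct parameter values at which the kernel is three-dimensional (an existence statement about the dispersion relation), and contain no information about the transversality of the parameter action on the reduced equation. Likewise, differentiating the bifurcation condition~\eqref{eq:bifurcation2} in $(\mu,\alpha,\xi)$ and noting that the three modes have ``different rates of change'' does not yield invertibility of a specific $3\times 3$ matrix; three distinct scalar derivatives do not make three vectors in $\R^3$ linearly independent. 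What is actually required is the computation of the entries $\langle \Diff_\mu\LL\phi_j^*,\tilde w_j^*\rangle_Y$, $\langle \Diff_\alpha\LL\phi_j^*,\tilde w_j^*\rangle_Y=B(\sin^*\theta_{k_j}/\theta_{k_j})^2+f(k_j)$ and $\langle \Diff_\xi\LL\phi_j^*,\tilde w_j^*\rangle_Y=k_j^2f(k_j)$, after which the determinant collapses (by row reduction) to the sign of
\[
(k_2^2-k_1^2)\bigl(\tilde f(\theta_{k_3})-\tilde f(\theta_{k_2})\bigr)+(k_3^2-k_2^2)\bigl(\tilde f(\theta_{k_1})-\tilde f(\theta_{k_2})\bigr),
\]
with $\tilde f(\theta)=a(\sin^*\theta)^2/(\pm^*(a-1)-(\sin^*\theta)^2)$. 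Showing this is strictly negative uses the hypothesis $a>1$ in an essential way, together with the split into two trigonometric modes and one hyperbolic mode (which forces $\tilde f(\theta_{k_3})<-1\le\tilde f(\theta_{k_1})<\tilde f(\theta_{k_2})<0$ via $\sin^2\theta_{k_1}>\sin^2\theta_{k_2}$). None of this appears in your argument, and without it the implicit function theorem cannot be invoked, so the proof is incomplete at its decisive step.
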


\begin{proof}[Proof of Theorem \ref{thm:threedim}]
The first part of the proof is analogous to that of \cite[Theorem 4.8]{eew10} (see also \cite{BaldiToland10}). The second part involves calculating the determinant of $3 \times 3$-matrix, the entries of which depend transcendentally on the parameters of the problem. This can be done via thorough, but rudimentary, investigation.

\subsection*{Part I. Reduction to a $3\times3$ function-valued matrix.}
Define $\tilde w^*_j := ( \eta_{\phi_j^*},\phi_j^*) \in \tilde X$, $j=1,2,3$, and recall that $Z=\spn\{\tilde w^*_1, \tilde w^*_2, \tilde w^*_3\}$. If $\Pi_{j} \Phi=\Phi_j \tilde w_j^*$, where $\Pi_{j}$ denotes the projection onto $\spn \{\tilde w_j^*\}$, equation \eqref{eq:Lyapunov-Schmidt} takes the form
\begin{equation}
\label{eq:reduced system}
\begin{aligned}
\Phi_j(t, \Lambda)&=0,\qquad j = 1,2,3.
\end{aligned}
\end{equation}
This is a system of three equations with seven unknowns, and we note that it has the trivial solution $(0,\Lambda)$ for all $\Lambda\in \UU^*$. 
\medskip

We introduce polar coordinates by writing $t= r\hat t$ with $|\hat t|=1$.
Then
\[
\Phi_j(r \hat t, \Lambda)=r \Psi_j(r \hat t, \Lambda), 
\]
where
\[
\Psi_j(t, \Lambda):=\int_0^1 \hat t \cdot \nabla_t \Phi_j(z r\hat t, \Lambda) \, dz, \qquad j = 1,2,3,
\]
since $\Phi_j(0, \Lambda)=0$.
\eqref{eq:reduced system} is equivalent to
\begin{equation}
\label{eq:reduced system 2}
\begin{aligned}
r\Psi_j(t, \Lambda)&=0,\qquad j =1,2,3.
\end{aligned}
\end{equation}
Since $\pa_{t_j} \Phi(0,\Lambda^*)=\Pi_Z \Diff_w \FF(0, \Lambda^*) w_j^*=0$,
we have that 
\begin{equation}\label{eq:psiphiat0}
\Psi_j(0, \Lambda^*)= \hat t \cdot \nabla_{t} \Phi_j(0, \Lambda^*)=0, \qquad j = 1,2,3.
\end{equation}
We therefore apply the implicit function theorem to $\Psi$ at the point $(0, \Lambda^*)$, by proving that the matrix 
\begin{equation}\label{eq:matrixM}
M:=
\begin{bmatrix}
\pa_\mu \Psi_1(0,\Lambda^*) & \pa_\mu \Psi_2(0,\Lambda^*) & \pa_\mu \Psi_3(0,\Lambda^*)\\
\pa_\alpha \Psi_1(0,\Lambda^*) &  \pa_\alpha \Psi_2(0,\Lambda^*) & \pa_\alpha \Psi_3(0,\Lambda^*)\\
\pa_\xi \Psi_1(0,\Lambda^*) &  \pa_\xi \Psi_2(0,\Lambda^*) & \pa_\xi \Psi_3(0,\Lambda^*)\\
\end{bmatrix}
\end{equation}
is invertible. We have that
\begin{align*}
\Pi_Z \Diff^2_{w\mu} \FF(0, \Lambda^*) w_j^*&=
 \frac{\langle \Diff^2_{w\mu} \FF(0,\Lambda^*) w_j^*, \tilde w_j^*\rangle_Y}{\| \tilde w_j^*\|_Y^2}  \tilde w_j^*,
\end{align*}
since $\Diff^2_{w\mu} \FF(0,\Lambda^*) w_j^*$ is orthogonal to $\tilde w_i^*$ for $i \neq j$.
In view of that $\pa_\mu \Psi_j(0,\Lambda^*) = \sum_{i=1}^3 \hat t_i \pa_{t_i}\pa_\mu \Phi_j(0, \Lambda^*)$
and $\pa_{t_i}\pa_\mu \Phi(0,\Lambda^*)=\Pi_Z \Diff^2_{w\mu} \FF(0, \Lambda^*) w_i^*$ we thus have that
\begin{equation}\label{eq:tjfallingout}
\pa_\mu \Psi_j(0,\Lambda^*) =\sum_{i=1}^3 \hat t_i \pa_{t_i}\pa_\mu \Phi_j(0, \Lambda^*)=\hat t_j\frac{\langle \Diff^2_{w\mu} \FF(0,\Lambda^*) w_j^*,\tilde w_j^*\rangle_Y}{\| \tilde w_j^*\|_Y^2} .
\end{equation}
Recall that
$\Diff_w \FF(0,\Lambda)=\LL(\Lambda)\TT^{-1}(\Lambda^\prime)$. Thus, 
\[
\Diff^2_{w \mu} \FF(0,\Lambda^*)w_j^*=\Diff_\mu \mathcal L(\Lambda^*)\phi^*+\LL(\Lambda^*)\Diff_\mu \TT^{-1}(\Lambda^*) w_j^*.
\]
Since $\LL = \Diff_w \FF(0) \circ \TT$ the second term on the right-hand side belongs to $\range \Diff_w \FF(0,\Lambda^*)$, and we find that
$\langle \Diff^2_{w\mu} \FF(0,\Lambda^*) w_j^*, \tilde w_j^*\rangle_Y=
\langle \Diff_\mu \LL(\Lambda^*)\phi_j^*, \tilde w_j^*\rangle_Y$. 
Thus
\[
\pa_\mu \Psi_j(0,\Lambda^*)=\frac{\langle \Diff_\mu \LL(\Lambda^*) \phi_j^*, \tilde w_j^*\rangle_Y}{\| \tilde w_j^*\|_Y^2}, \qquad j = 1,2,3. 
\]
Similar arguments hold for $\pa_\mu \Psi_j(0,\Lambda^*)$ and $\pa_\alpha \Psi_j(0,\Lambda^*)$,  and we find that
\begin{equation}\label{eq:determinantM}
\begin{aligned}
\det M =
C\hat t_1 \hat t_2\hat t_3 \det 
\begin{bmatrix}
\langle \Diff_{\mu} \LL(\Lambda^*) \phi^*_1 ,\tilde w^*_1 \rangle_Y & 
\langle \Diff_{\mu} \LL(\Lambda^*) \phi^*_2 ,\tilde w^*_2  \rangle_Y &
\langle \Diff_{\mu} \LL(\Lambda^*) \phi^*_3 ,\tilde w^*_3  \rangle_Y\\
\langle \Diff_{\alpha} \LL(\Lambda^*) \phi^*_1,\tilde w^*_1 \rangle_Y & 
\langle \Diff_{\alpha} \LL (\Lambda^*) \phi^*_2,\tilde w^*_2 \rangle_Y &
\langle \Diff_{\alpha} \LL (\Lambda^*) \phi^*_3,\tilde w^*_3 \rangle_Y \\
\langle \Diff_{\xi} \LL(\Lambda^*) \phi^*_1,\tilde w^*_1 \rangle_Y & 
\langle \Diff_{\xi} \LL (\Lambda^*) \phi^*_2,\tilde w^*_2 \rangle_Y &
\langle \Diff_{\xi} \LL (\Lambda^*) \phi^*_3,\tilde w^*_3 \rangle_Y \\
\end{bmatrix},
\end{aligned}
\end{equation}
where
$C=\| \tilde w_1^*\|_Y^{-2} \| \tilde w_2^*\|_Y^{-2} \| \tilde w_3^*\|_Y^{-2}\ne 0$.

\subsection*{Part II. Determining $\det(M)$.}
Let $j \in \{1,2,3\}$, and
\[
\pm^* := \sgn(\xi k_j^2+\alpha).
\]
One has (see \cite{eew10})
 \begin{align*}
\left\langle \Diff_{\mu} \LL (\Lambda^*) \phi^*_j ,\tilde w^*_j \right\rangle_Y &= A \left(\frac{\sin^*(\theta_{k_j})}{\theta_{k_j}}\right)^2, 
 \end{align*} 
and
\begin{align*}
\left\langle \Diff_{\alpha} \LL(\Lambda^*) \phi^*_j ,\tilde w^*_j \right\rangle_Y=B\left(\frac{\sin^*(\theta_{k_j})}{\theta_{k_j}}\right)^2+f(k_j),
\end{align*}
where $A$ and $B$ are constants, $A$ is non-zero, and 
\begin{equation}\label{eq:f-function}
f(k_j):=
\pm^* \, \frac\pi2  \frac{\theta_{k_j}-\cos^*(\theta_{k_j})\sin^*(\theta_{k_j})}{\theta_{k_j}^3},  
 \end{equation}
which is naturally extended to a continuous function of $\theta_{k_j}$.
It follows from~\eqref{eq:Lexpression} that $\Diff_\xi \LL(\Lambda) \phi = (0, \partial_q^2 \phi)$, whence
\[
\Diff_\xi \LL(\Lambda^*) \phi^*_j = (0, -k_j^2 \phi^*_j).
\]
This is $k_j^2$ times the second component of $\Diff_{\alpha} \LL(\Lambda^*) \phi^*_j$, and  straightforward integration shows that
\[
\left\langle \Diff_{\xi} \LL(\Lambda^*) \phi^*_j ,\tilde w^*_j \right\rangle_Y= k_j^2 f(k_j).
\] 
We are thus left with calculating the determinant
\allowdisplaybreaks
\begin{equation}\label{eq:determinant}
\begin{aligned}
&\det \begin{pmatrix}
\frac{A (\sin^*(\theta_{k_1}))^2}{\theta_{k_1}^2} & 
\frac{A (\sin^*(\theta_{k_2}))^2}{\theta_{k_2}^2} &
\frac{A (\sin^*(\theta_{k_3}))^2}{\theta_{k_3}^2}\\
\frac{B (\sin^*(\theta_{k_1}))^2}{\theta_{k_1}^2}+f(k_1) & 
\frac{B (\sin^*(\theta_{k_2}))^2}{\theta_{k_2}^2}+f(k_2) &
\frac{B (\sin^*(\theta_{k_3}))^2}{\theta_{k_3}^2}+f(k_3)\\
\qquad k_1^2 f(k_1) & \qquad k_2^2 f(k_2) & \qquad k_3^2 f(k_3)  
\end{pmatrix}\\
&= A \Bigg( f(k_1) f(k_2) (k_2^2 - k_1^2)\, \frac{(\sin^*(\theta_{k_3}))^2}{\theta_{k_3}^2}  + f(k_1) f(k_3) (k_1^2 - k_3^2)\, \frac{(\sin^*(\theta_{k_2}))^2}{\theta_{k_2}^2}\\
&\qquad\quad + f(k_2) f(k_3) (k_3^2 - k_2^2)\, \frac{(\sin^*(\theta_{k_1}))^2}{\theta_{k_1}^2} \Bigg)\\
&= A \prod_{j=1}^3 f(k_j) \Bigg(  (k_2^2 - k_1^2)\, \frac{(\sin^*(\theta_{k_3}))^2}{\theta_{k_3}^2 f(k_3)}  +  (k_1^2 - k_3^2)\, \frac{(\sin^*(\theta_{k_2}))^2}{ \theta_{k_2}^2 f(k_2)}\\
&\qquad\quad + (k_3^2 - k_2^2)\, \frac{(\sin^*(\theta_{k_1}))^2}{ \theta_{k_1}^2 f(k_1)} \Bigg).
\end{aligned}
\end{equation}
The function $\theta \mapsto f(\theta)$ defined by \eqref{eq:f-function} is everywhere negative, whence we are left with investigating the expression
\begin{align*}
\tilde f(\theta) &:= \frac\pi2 \frac{(\sin^*(\theta))^2}{\theta^2 f(\theta)}\\ 
&= \pm^* \frac{(\sin^*(\theta))^2}{\theta^2} \left( \frac{\theta^3}{\theta - \cos^*(\theta) \sin^*(\theta)} \right)\\
&= \pm^* \frac{a (\sin^*(\theta))^2}{a - \frac{a \cos^*(\theta) \sin^*(\theta)}{\theta}}\\
&=  \frac{a  (\sin^*(\theta))^2}{\pm^*\,(a -1) - (\sin^*(\theta))^2}.
\end{align*}
Note that the denominator is strictly negative: for $\xi k_j^2+\alpha < 0$ since $a > 1$, and for $\xi k_j^2+\alpha > 0$ since  $\theta \coth{\theta} - 1 - \sinh^2(\theta) < 0$ for all $\theta \neq 0$. 
Let $x:= (\sin^*(\theta))^2$. For $a > 1$ the function $x \mapsto \frac{ax}{a-1-x}$ is strictly increasing with limit $-a$ as $x \to \infty$, so we immediately get that
\begin{equation}\label{eq:tildef3}
\tilde f(\theta_{k_3}) < -1.
\end{equation}
When $\xi k_j^2+\alpha < 0$ the function $[0,1] \ni x \mapsto \frac{ax}{1-a-x}$ is strictly decreasing with image $[-1,0]$. Since
\[
\theta_{k_1} \cot(\theta_{k_1}) = \theta_{k_2} \cot(\theta_{k_2}) < \theta_{k_1} \cot(\theta_{k_2}),
\]
with the left-hand side positive, we may square both sides to obtain that
\[
\sin^2(\theta_{k_1}) >  \sin^2(\theta_{k_2}). 
\]
Taking \eqref{eq:tildef3} into consideration, we thus conclude that
\[
\tilde f(\theta_{k_3}) < \tilde f(\theta_{k_1}) < \tilde f(\theta_{k_2}) < 0.
\]
Returning to \eqref{eq:determinant}, this shows that the determinant of $M$ is non-zero:
\begin{align*}
&(k_2^2 - k_1^2) \tilde f(\theta_{k_3})  +  (k_1^2 - k_3^2)\tilde f(\theta_{k_2}) + (k_3^2 - k_2^2)\tilde f(\theta_{k_1})\\
&= (k_2^2 - k_1^2) ( \tilde f(\theta_{k_3}) - \tilde f(\theta_{k_2}) )  + (k_3^2 - k_2^2) ( \tilde f(\theta_{k_1}) - \tilde f (\theta_{k_2})) < 0.
\end{align*} 
The condition $|t_1 t_2 t_3|>\delta |t|^3$ implies that $|\hat t_1 \hat t_2 \hat t_3|>\delta$, so that the determinant is uniformly bounded from below. This guarantees that the interval $0<|t|<\ve$ can be chosen uniformly.
\end{proof}


\section{The structure of the solution set}\label{sec:solutions}

Theorem~\ref{thm:threedim} does not give the full local solution set near the bifurcation point. In particular, since the solutions are bounded away from the coordinate planes in \((t_1,t_2,t_3)\)-space, any solutions obtained by restricting the period and using bifurcation with a one- or two-dimensional kernel are excluded. In this section we present a method which gives a more complete picture of the solution set by taking into account the number theoretic properties of the integers $k_1, k_2, k_3$. 
While the method is not guaranteed to find the full local solution set, the solutions obtained by lower-dimensional bifurcation are included.

For the purpose of the following discussion it is conveniant to ignore the order of the numbers $k_1, k_2, k_3$. We therefore relabel them as $m_1, m_2, m_3$, with no particular order, but assuming that  $\gcd(m_1, m_2, m_3)=1$.
If $m=\gcd(m_1, m_2, m_3)>1$, then by working in the subspace $X^{(m)}$ of $2\pi/m$-periodic functions we can reduce the problem to the previous case. We therefore have the following four different cases.

\begin{align}
&\gcd(m_1, m_2)>1,\quad& \gcd(m_1, m_3)>1,\quad& \gcd(m_2, m_3)>1.\label{case:i}\tag{i}\\
&\gcd(m_1, m_2)=1, \quad&\gcd(m_1, m_3)>1, \quad &\gcd(m_2, m_3)>1.\label{case:ii}\tag{ii}\\
&\gcd(m_1, m_2)=1, \quad&\gcd(m_1, m_3)=1, \quad&\gcd(m_2, m_3)>1.\label{case:iii}\tag{iii}\\
&\gcd(m_1, m_2)=1, \quad&\gcd(m_1, m_3)=1,\quad&\gcd(m_2, m_3)=1.\label{case:iv}\tag{iv}
\end{align}

Each of the above cases have subcases determined by whether the different $m_j$ divide each other or not. In what follows, all cases will be presented in the form of the reduced equations they give rise to, as well as the corresponding solution set in \((t_1,t_2, t_3)\)-space (cf. Theorem~\ref{thm:threedim} and Equation~\eqref{eq:mainset}). We recall that this is a subset of the open ball
\begin{equation}\label{eq:ballofsolutions}
\BB := \{(\overline w(t),\overline \mu(t), \overline \alpha(t), \overline \xi (t)) \colon 0<|t|<\ve \}
\end{equation}
of possible solutions. The method is illustrated with more details for the first cases, whereas the last and analogue cases are presented in a shorter manner.
\medskip

\subsection{Case \eqref{case:i}: an open ball of solutions.} \label{subsec:anopenball}
In this case no $m_i$ can divide another 
$m_j$. Indeed, assume e.g.~that $m_1 \mid m_2$. Then $\gcd(m_1, m_3) \mid m_2$ as well, so that $\gcd(m_1, m_2, m_3)=\gcd(m_1, m_3)>1$, yielding a contradiction. A numerical example is given by $(6, 10, 15)$.

The relations between the $m_j$ imply that
\begin{equation}\label{eq:cases_i}
\begin{aligned}
&\Phi_1(0,t_2,t_3)=0,\\
&\Phi_2(t_1, 0, t_3)=0,\\
&\Phi_3(t_1, t_2, 0)=0,
\end{aligned}
\end{equation}
where $\Lambda$ has been supressed for convenience. Then \eqref{eq:cases_i} is equivalent to that
\begin{equation}\label{eq:tjphij}
\Phi_j(t, \Lambda) = \int_0^1 \frac{d}{dz} \Phi_j(zt_j; t_{i}|_{i\neq j}, \Lambda)\,dz =  t_j \Psi_j(t, \Lambda) = 0,
\end{equation}
with
\[
\Psi_j(t, \Lambda):=\int_0^1 \Diff_{t_j} \Phi_j(zt_j; t_{i}|_{i\neq j}, \Lambda)\,dz,
\]
both for $j=1,2,3$. At the point $(0,\Lambda^*)$, the relation $\Psi_j(0, \Lambda^*) = \Diff_{t_j} \Phi_j(0, \Lambda^*)$ enables us to apply the implicit function theorem to \eqref{eq:matrixM} without any $t$-dependent coefficients appearing before the matrix in \eqref{eq:determinantM} (cf. \eqref{eq:psiphiat0}); the result is a full three-dimensional ball of solutions, 
\[
\SS_{(i)} = \BB,
\] 
as given in \eqref{eq:ballofsolutions}.
\medskip

\begin{figure}
\includegraphics[width=0.2\linewidth, height=0.20\linewidth]{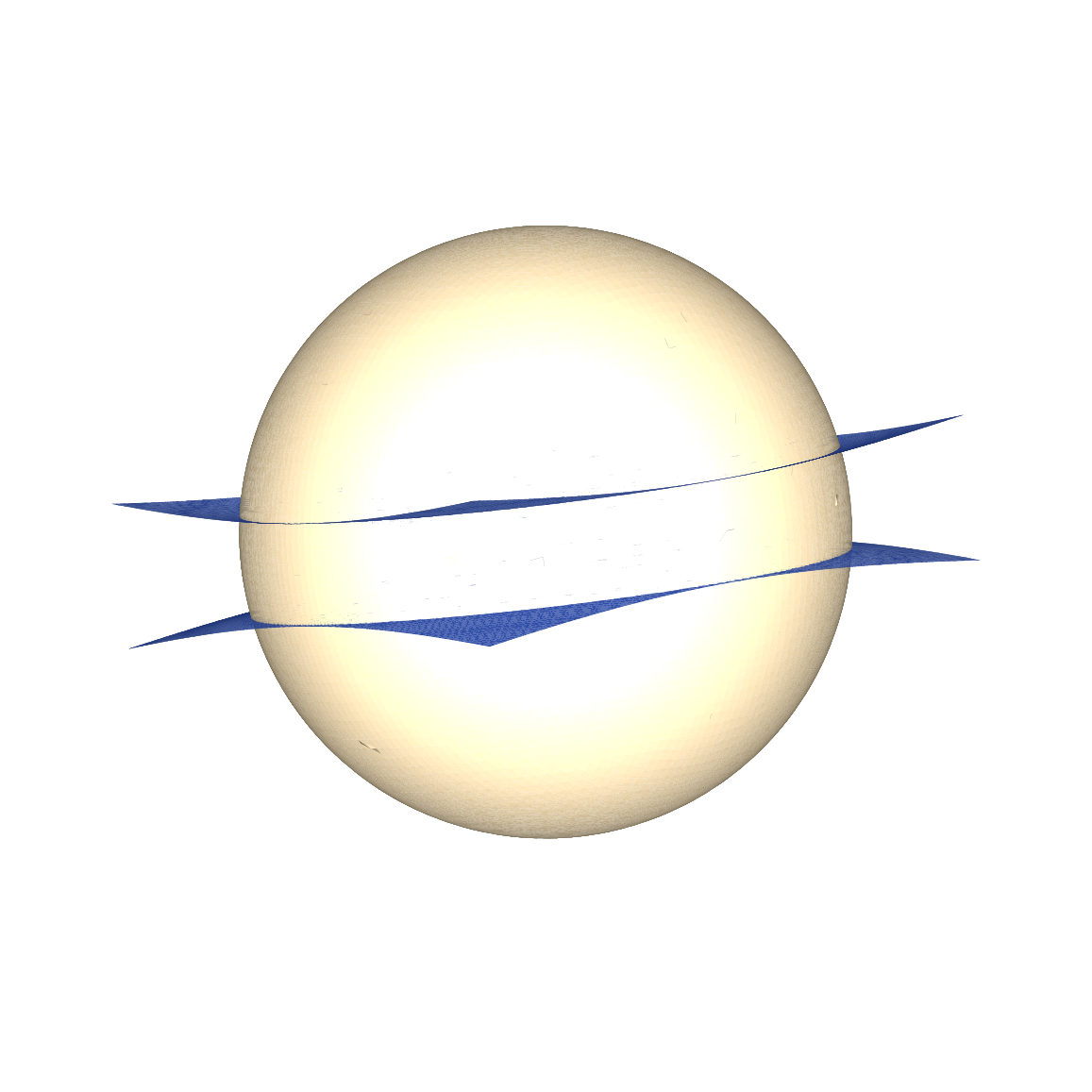}\qquad
\includegraphics[width=0.2\linewidth, height=0.20\linewidth]{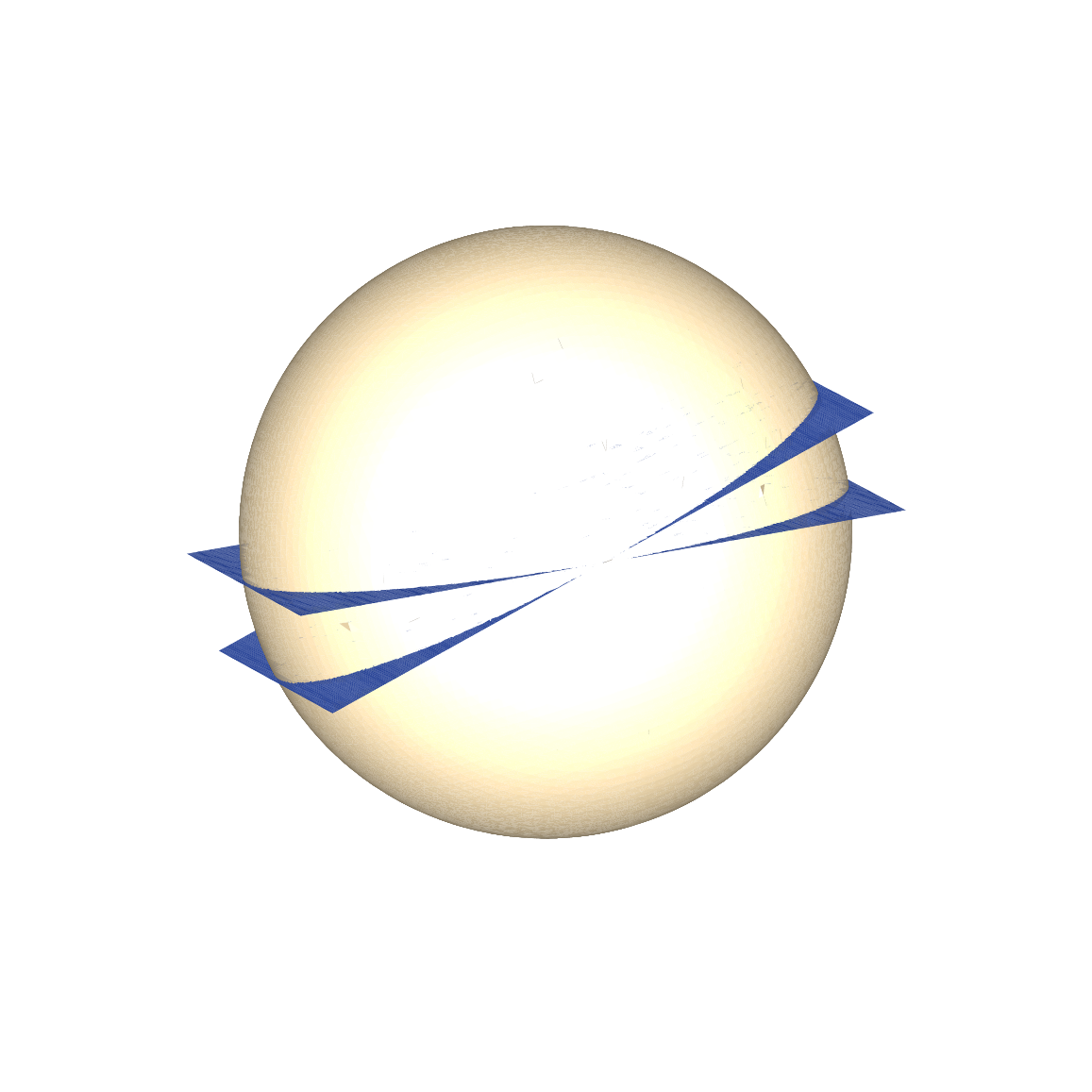}\qquad
\includegraphics[width=0.2\linewidth, height=0.20\linewidth]{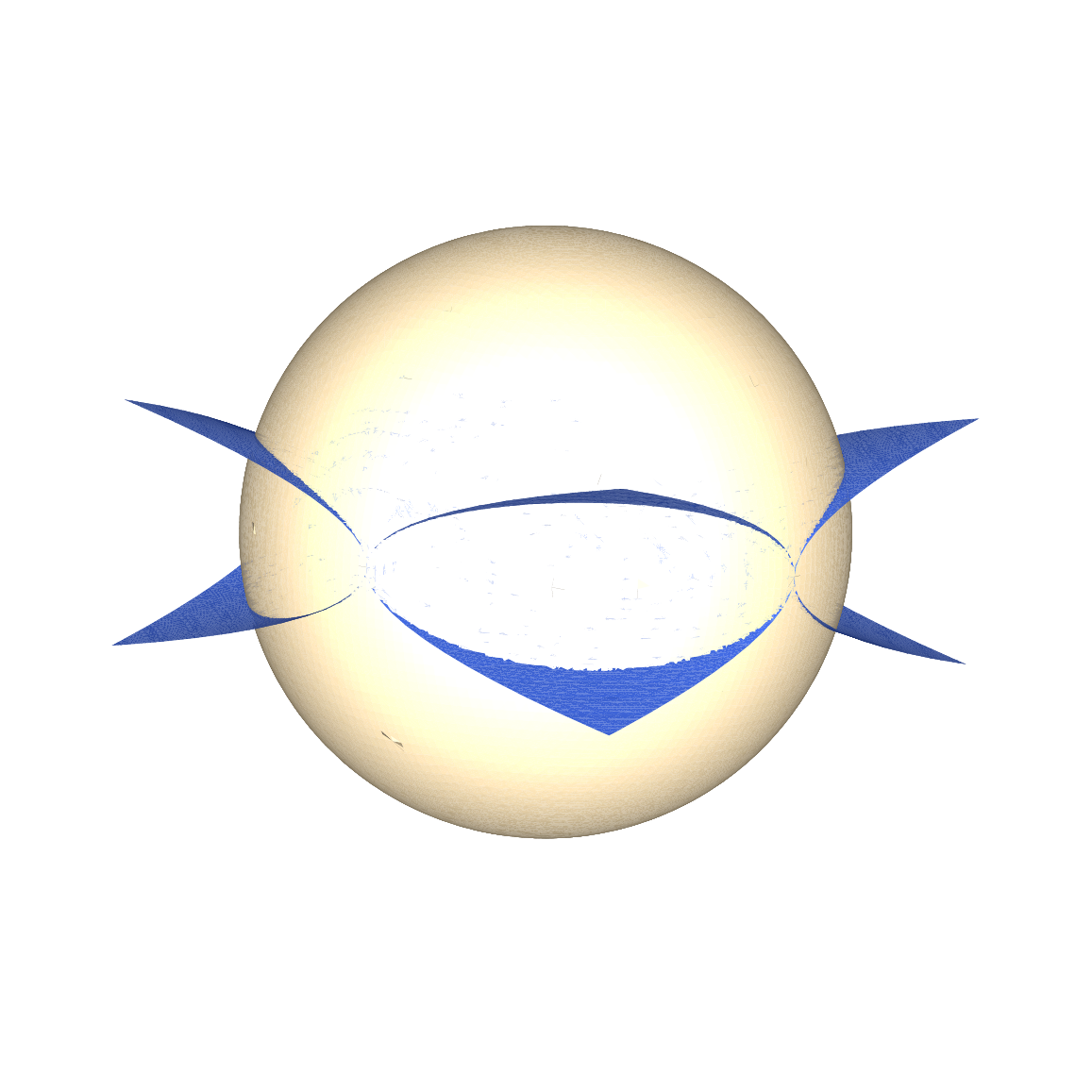}
\caption{Left: The cases (ii)\,a, (ii)\,b and (ii)\,c, in order from left to right. The illustrations show the qualitative intersection of the solution set \(\SS_{(ii)_{\alpha}}\), \(\alpha=a,b,c\), with the ball \(\BB\) of radius \(\varepsilon > 0\) in \((t_1,t_2,t_3)\)-space.} 
\label{fig:caseii}
\end{figure}

\subsection{Case \eqref{case:ii}}
Using the same argument as above, we notice that $m_3 \nmid m_1, m_2$.
Also, $m_1$ cannot divide $m_2$, or contrariwise, since this would give $m_1=1$ and therefore 
would contradict $\gcd(m_1, m_3)>1$.
Up to relabeling, we therefore obtain three alternatives:
\begin{enumerate}
\item[(a)] $m_1 \mid m_3$, $m_2 \mid m_3$. An example of this is $(2, 3, 6)$. One obtains
\begin{align*}
&\Phi_1(0,t_2, t_3)=0, \\
&\Phi_2(t_1, 0,t_3)=0, \\
& \Phi_3(0, 0, 0)=0.
\end{align*}
Here \eqref{eq:tjphij} holds for \(j=1,2\), whereas for \(\Phi_3\) we use spherical coordinates \(t = r\hat t\), writing
\[
\Phi_3(t, \Lambda) = r \Psi_3(t,\Lambda) \quad\text{ with }\quad \Psi_3(0,\Lambda^*) = \hat t \cdot \nabla_t \Phi_3(0, \Lambda^*).
\] 
An iteration of the proof on page~\pageref{eq:tjfallingout}, see especially \eqref{eq:tjfallingout}, then yields that the system
\begin{align*}
t_1 \Psi_1(t,\Lambda) &= 0,\\
t_2 \Psi_2(t,\Lambda) &= 0,\\
r \Psi_3(t,\Lambda) &= 0,
\end{align*}
can be solved using the implicit function theorem whenever $|\hat t_3| \geq \delta$, i.e., whenever $|t_3| \geq \delta |t|$. Hence, for every $\delta \ll 1$, the restriction
\[
\SS_{(ii)_a} :  |t_3| \geq \delta |t|
\] 
of the ball \(\BB\) describes a smooth family of small-amplitude nontrivial solutions. The solutions that may be found by one- and two-dimensional bifurcation by setting the different $t_j$'s to zero, namely
\begin{align*}
&t_1 = 0, \quad 0 < (t_2^2 + t_3^2)^{1/2} < \varepsilon, \quad |t_3| \geq \delta |t_2| \\
&t_2 = 0,  \quad 0 < (t_1^2 + t_3^2)^{1/2} < \varepsilon,  \quad |t_3| \geq \delta |t_1|,
\end{align*}
are included in this solution set; and for $t_3=0$ with $t_1^2 + t_2^2 \neq 0$ no solutions are found.\\

\item[(b)] $m_1 \mid m_3$, $m_2 \nmid m_3$. An example of this is $(2, 9, 12)$. One finds
\begin{align*}
&\Phi_1(0,t_2, t_3)=0, \\
&\Phi_2(t_1, 0,t_3)=0, \\
& \Phi_3(0, t_2, 0)=0.
\end{align*}
Again \eqref{eq:tjphij} holds for \(j=1,2\), whereas for \(\Phi_3\) we use \emph{cylindrical coordinates} \(t = (r_{1,3}\hat t_1, t_2, r_{1,3} \hat t_3)\), with \(r_{i,j} := (t_i^2 + t_j^2)^{1/2}\). This yields
\[
\begin{aligned}
\Phi_3(t, \Lambda) &= \int_0^1 \frac{d}{dz} \Phi_3(z r_{1,3} (\hat t_1,\hat t_3);t_2, \Lambda)\,dz\\ &=  r_{1,3} \int_0^1 (\hat t_1, \hat t_3) \cdot \nabla_{(t_1, t_3)} \Phi_3(zr_{1,3}(\hat t_1,\hat t_3);t_2, \Lambda)\,dz\\ &= r_{1,3} \Psi_3(t,\Lambda),
\end{aligned}
\]
with
\[ 
\Psi_3(0,\Lambda^*) = (\hat t_1, \hat t_3) \cdot \nabla_{(t_1, t_3)} \Phi_3(0, \Lambda^*),
\] 
and the system of equations now becomes
\begin{align*}
t_1 \Psi_1(t,\Lambda) &= 0,\\
t_2 \Psi_2(t,\Lambda) &= 0,\\
r_{1,3} \Psi_3(t,\Lambda) &= 0.
\end{align*}
This can be solved whenever $|\hat t_3| \geq \delta$, i.e., for $|t_3| \geq \delta (t_1^2 + t_3^2)^{1/2}$. Hence, for any $\delta \ll 1$, we obtain the family of solutions given by the restriction
\[
\SS_{(ii)_b} : |t_3| \geq \delta |t_1|
\] 
to \(\BB\). As in the case $(ii)_a$, any 'lower-dimensional' solutions,
\begin{align*}
&t_1 = 0, \quad 0 < (t_2^2 + t_3^2)^{1/2} < \varepsilon,\\
&t_2 = 0,  \quad 0 < (t_1^2 + t_3^2)^{1/2} < \varepsilon,  \quad |t_3| \geq \delta |t_1|,\\
&t_1 = t_3 = 0, \quad 0 < |t_2| < \varepsilon;
\end{align*}
are included in the above solution set; and for $t_3=0$ with $t_1 \neq 0$ the implicit function theorem is inconclusive.
\medskip

\item[(c)] $m_1 \nmid m_3$, $m_2 \nmid m_3$. An example is $(4, 9, 30)$.
\begin{align*}
&\Phi_1(0,t_2, t_3)=0, \\
&\Phi_2(t_1, 0,t_3)=0, \\
& \Phi_3(t_1, 0,0)=\Phi_3(0, t_2, 0)=0.
\end{align*}
The difference with respect to case $(iii)_b$ is that we may express \(\Phi_3\) using different cylindrical coordinates as either \(t = (r_{1,3}\hat t_1, t_2, r_{1,3} \hat t_3)\) or \(t = (t_1, r_{2,3} \hat t_2, r_{2,3} \hat t_3)\),  where $r_{i,j} = (t_i^2 + t_j^2)^{1/2}$. Thus, the original system reduces to
\begin{align*}
t_j \Psi_j(t,\Lambda) &= 0,\\
r_{j,3} \Psi_3(t,\Lambda) &= 0,
\end{align*}
for \(j=1,2\). This can be solved whenever $|t_3| \geq \delta (t_j^2 + t_3^2)^{1/2}$ for \emph{either} $j=1$ or $j=2$, and we obtain
\[
\SS_{(ii)_c} :|t_3| \geq \delta \min(|t_1|, |t_2|).
\] 
The solutions obtained from lower-dimensional bifurcation,
\begin{align*}
&t_i = 0,  \quad 0 < (t_j^2 + t_3^2)^{1/2} < \varepsilon,  \quad |t_3| \geq \delta |t_j|,\\
&t_i = t_3 = 0, \quad 0 < |t_j| < \varepsilon,
\end{align*}
for $i,j = 1,2$, $i \neq j$, are all included in the larger solution set; and for $t_3=0$ with $t_1 t_2 \neq 0$ no solutions are found.
\end{enumerate}
\medskip

\begin{figure}
\includegraphics[width=0.2\linewidth, height=0.20\linewidth]{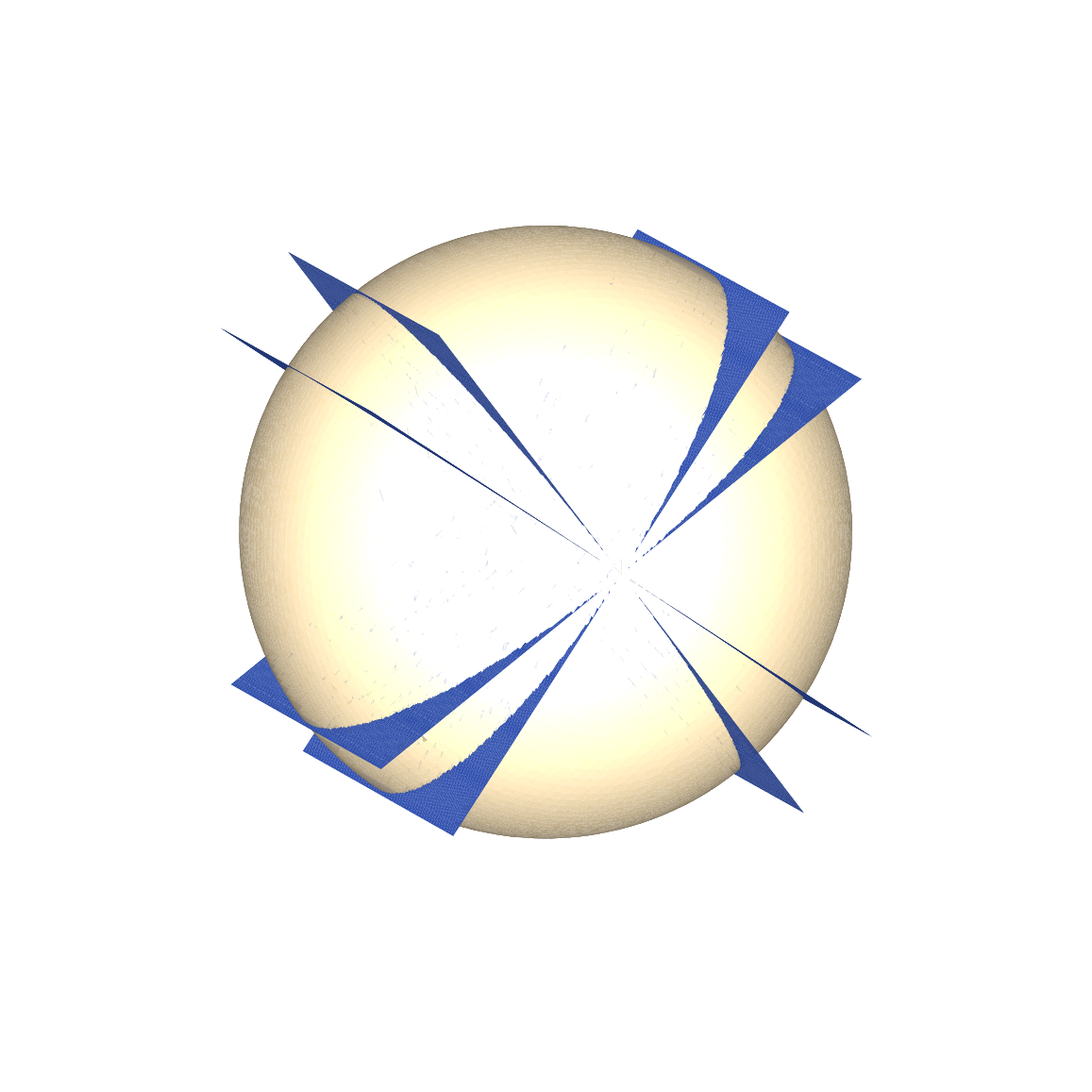}\qquad
\includegraphics[width=0.2\linewidth, height=0.20\linewidth]{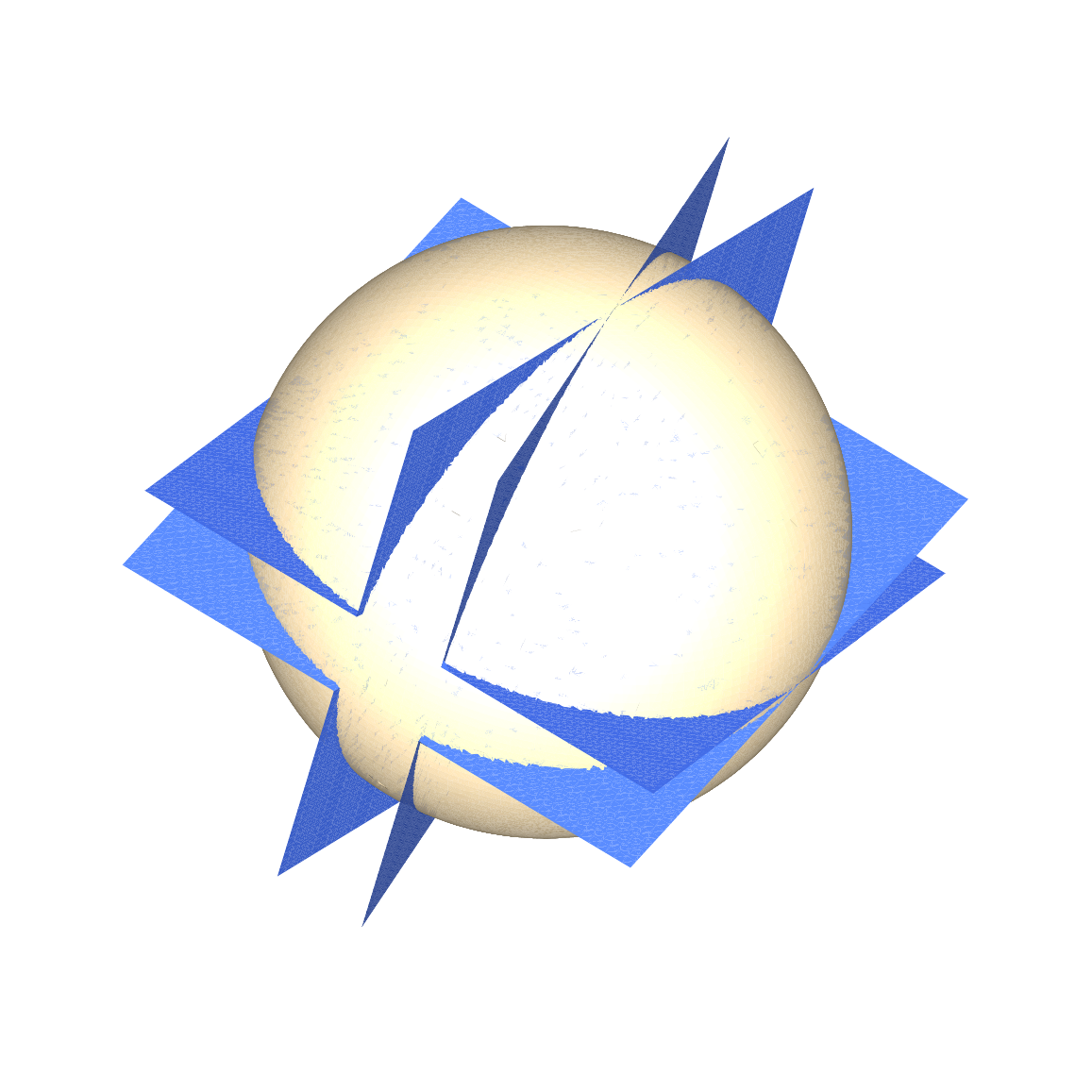}\qquad
\includegraphics[width=0.2\linewidth, height=0.20\linewidth]{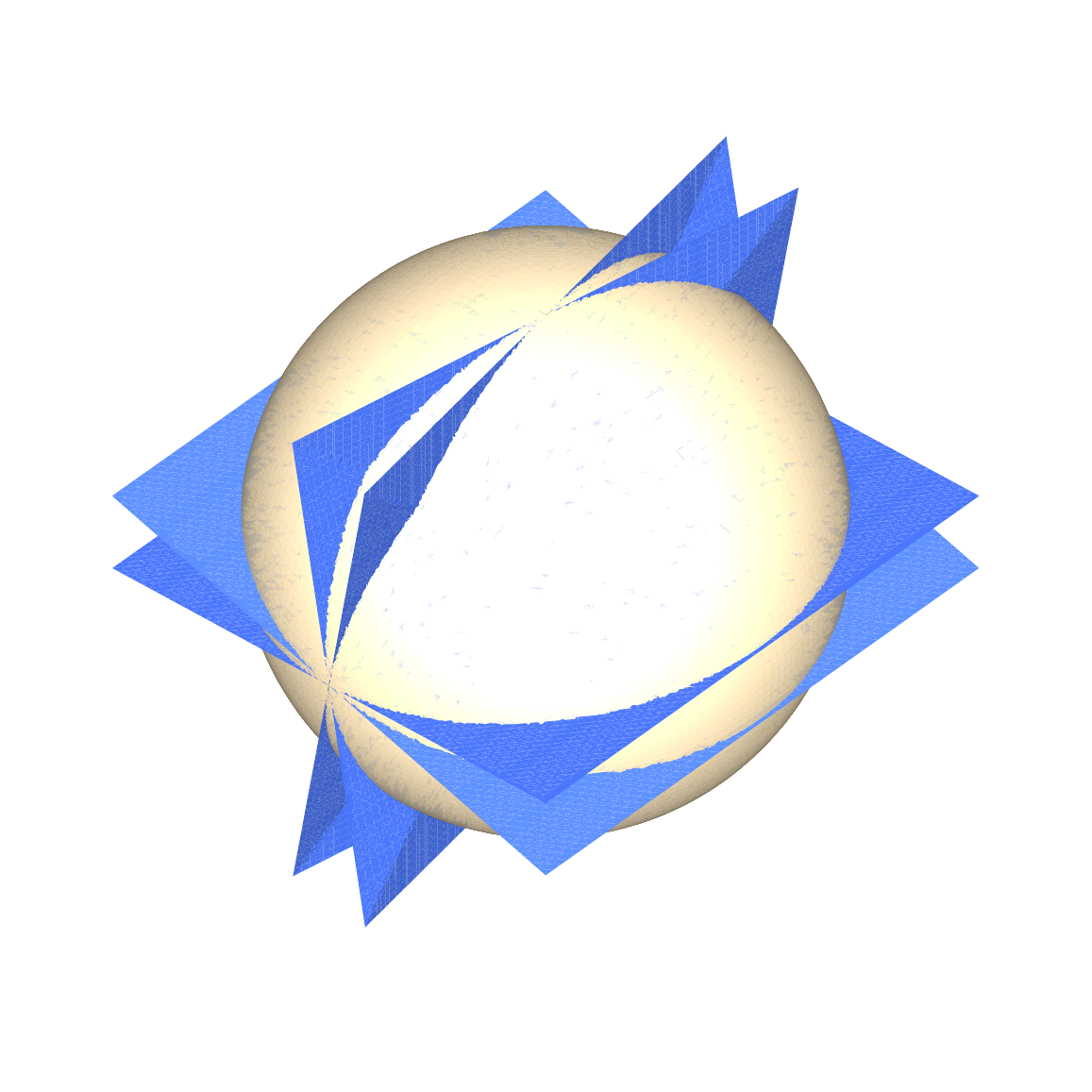}
\caption{\scriptsize Left: The cases (iii)\,a, (iii)\,b and (iii)\,c, in order from left to right. The illustrations show the qualitative intersection of the solution set \(\SS_{(iii)_{\alpha}}\), \(\alpha=a,b,c\), with the ball \(\BB\) of radius \(\varepsilon > 0\) in \((t_1,t_2,t_3)\)-space.} 
\label{fig:caseiii}
\end{figure}

\subsection{Case \eqref{case:iii}} The only possibilities for some $m_i$ to divide another $m_j$ are that $m_1=1$, in which case it divides both $m_2$ and 
$m_3$, or that $m_2$ divides $m_3$, or contrariwise. Without loss of generality, assume that 
$m_3\nmid m_2$. We then obtain the following alternatives:
\begin{enumerate}
\item[(a)] $m_1=1$, $m_2 \mid m_3$. Then $m_1 \mid m_2 \mid m_3$. An example is $(1, 2, 4)$.
\begin{align*}
&\Phi_1(0,t_2, t_3)=0,\\
 &\Phi_2(0,0,t_3)=0, \\ 
 &\Phi_3(0, 0,0)=0.
\end{align*}
In this case we combine the techniques from the cases $(i)$, $(ii)_a$ and $(ii)_b$, by expanding $\Phi_1$ in $t_1$, $\Phi_2$ using cylindral coordinates $(r_{1,2} \hat t_1, r_{1,2} \hat t_2, t_3)$, and $\Phi_3$ using spherical coordinates. The resulting determinant will be non-zero whenever $|t_2| \geq \delta |t_1|$ and $|t_3| \geq \delta |t|$, which may be reduced to
\[
\SS_{(iii)_a}: \quad  |t_3| \geq \delta |t_2| \geq \delta^2 |t_1|,
\]
for some $\delta \ll 1$. The system of equations
\begin{align*}
t_1 \Psi_1(t,\Lambda) &= 0,\\
r_{1,2} \Psi_2(t,\Lambda) &= 0,\\
r \Psi_3(t,\Lambda) &= 0,
\end{align*}
has no further non-trivial solutions.

\medskip

\item[(b)] $m_1=1$, $m_2 \nmid m_3$. An example is $(1, 4, 6)$. Using techniques as in the previous examples, one obtains
\begin{align*}
&\Phi_1(0,t_2, t_3)=0,\\
 &\Phi_2(0,0,t_3)=0, \\ 
 &\Phi_3(0, t_2, 0)=0,
\end{align*}
with
\[
\SS_{(iii)_b}: \quad  \min(|t_2|,|t_3|) \geq \delta |t_1|.
\]
\medskip

\item[(c)] $m_1 >1$, $m_2 \mid m_3$. An example is $(2, 3, 9)$. We have
 \begin{align*}
&\Phi_1(0,t_2, t_3)=0,\\
&\Phi_2(t_1,0,0)=\Phi_2(0,0,t_3)=0, \\ 
&\Phi_3(t_1, 0,0)=0.
\end{align*}
Here, the union of $\{|t_3| \geq \delta |t_2| \geq \delta^2 |t_1|\}$ and $\{|t_3| \geq \delta |t_2| \geq \delta^2 |t_3|\}$ gives 
\[
\SS_{(iii)_c}: \quad  |t_3| \geq \delta |t_2| \geq \delta^2 \min(|t_1|,|t_3|).
\]
\medskip

\item[(d)] $m_1>1$, $m_2 \nmid m_3$. An example is $(2, 15, 21)$. For
\begin{align*}
&\Phi_1(0,t_2, t_3)=0,\\
 &\Phi_2(t_1,0,0)=\Phi_2(0,0,t_3)=0, \\ 
 &\Phi_3(t_1, 0,0)=\Phi_3(0, t_2, 0)=0.
\end{align*}
the solution set is the union
\begin{align*}
&\{|t_3| \geq \delta |t_2| \geq \delta^2 |t_1|\}\\ 
&\cup \{|t_2| \geq \delta |t_3| \geq \delta^2 |t_1|\}\\ 
&\cup \{|t_3| \geq \delta |t_2| \geq \delta^2 |t_3|\}\\
&\cup \{\min(t_3,t_2) \geq \delta |t_1|\},
\end{align*}
and we obtain
\[
\SS_{(iii)_d}: \quad  \min(|t_2|,|t_3|) \geq \delta \min(|t_1|, \max(|t_2|,|t_3|)).
\]
\medskip
\end{enumerate}

\begin{figure}
\includegraphics[width=0.2\linewidth, height=0.20\linewidth]{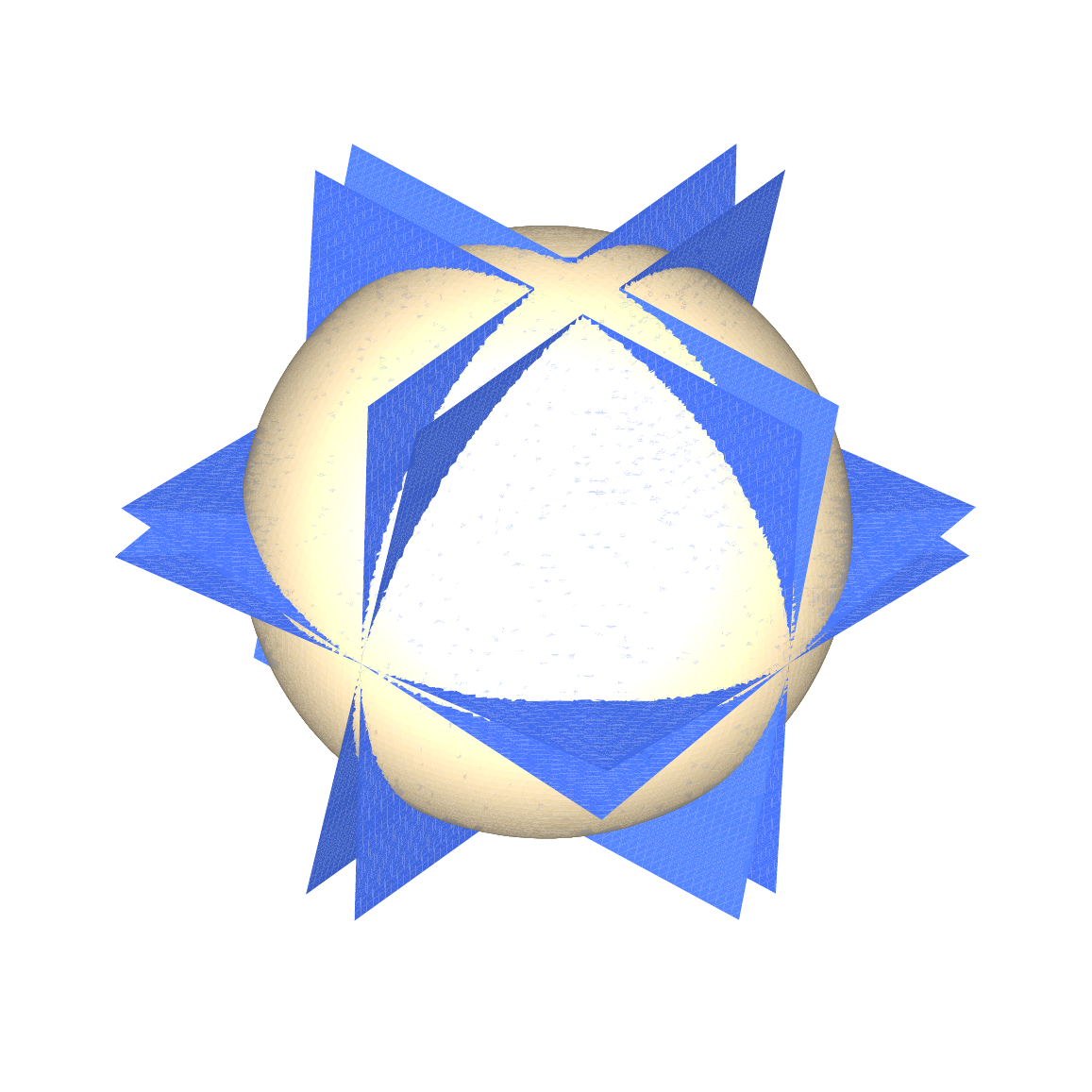}
\qquad
\includegraphics[width=0.2\linewidth, height=0.20\linewidth]{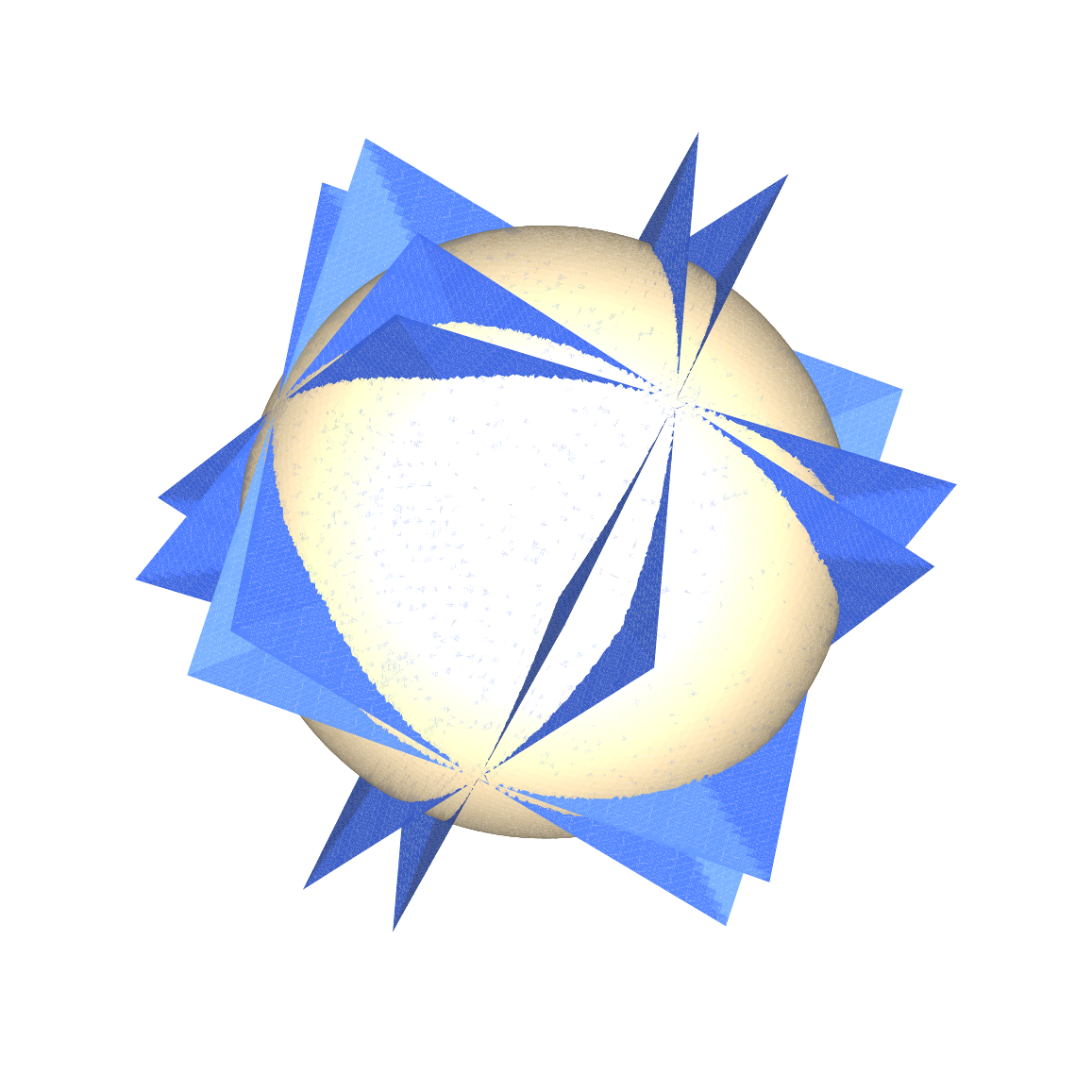}
\caption{Left: The cases (iv)\,a and (iv)\,b, in order from left to right. The illustrations show the qualitative intersection of the solution set \(\SS_{(ii)_{\alpha}}\), \(\alpha=a,b,c\), with the ball \(\BB\) of radius \(\varepsilon > 0\) in \((t_1,t_2,t_3)\)-space. Note the difference between to cases, which is visible only in one axial direction.} 
\label{fig:caseiv}
\end{figure}

\subsection{Case \eqref{case:iv}}
The only possiblities for some $m_i$ to divide another $m_j$ is that $m_i=1$. Without loss of generality, assume that $m_1=1$. 
There are two cases: 
\begin{enumerate}
\item[(a)] $m_1=1$. Then $m_1 \mid m_2, m_3$. An example is $(1, 2, 3)$. We have
\begin{align*}
&\Phi_1(0,t_2, 0)=\Phi_1(0,0,t_3)=0,\\
&\Phi_2(0,0,t_3)=0,\\
&\Phi_3(0, t_2, 0)=0,
\end{align*}
and
\[
\SS_{(iv)_a}: \quad  \min(|t_3|,|t_2|) \geq \delta |t_1| \geq \delta^2 \min(|t_3|,|t_2|).
\]

\medskip

\item[(b)] $m_1 \neq 1$. Then $m_1 \nmid m_2$, $m_1 \nmid m_3$, $m_2 \nmid m_3$. An example is $(2, 3, 5)$. The system
\begin{align*}
&\Phi_1(0,t_2, 0)=\Phi_1(0,0,t_3)=0,\\
 &\Phi_2(t_1,0,0)=\Phi_2(0,0,t_3)=0, \\ 
 &\Phi_3(t_1, 0,0)=\Phi_3(0, t_2, 0)=0.
\end{align*}
allows all eight  possible combinations of $|t_i| \geq \delta |t_j|$, with $i=1,2,3$ and $j \neq i$. Now, without loss of generality, say that $|t_1| \geq |t_2| \geq |t_3|$. Then also $|t_1| \geq \delta |t_2|$ and $|t_2| \geq \delta |t_3|$, so the only remaining condition is that
\[
|t_3| \geq \delta \min(|t_1|,|t_2|).
\]
Since this is to hold for arbitrary indices, one may let $|t_{j}| := \min_{i} |t_i|$  to obtain the uniform condition
\[
\SS_{(iv)_b}: \quad  |t_j|  \geq \delta \min_{i\neq j} |t_i|.
\]
Note that the set \(\SS_{(iv)_b}\) includes elements from the $t_j$-axes, for which $t_i = 0$ for $i \neq j$, although not the complete coordinate planes through the origin (for which only one $t_j = 0$).
\end{enumerate}
\bigskip

\begin{example}
For an illustration of our results, consider the wavenumbers \(k_1 = 6\), \(k_2 = 10\) and \(k_3 = 15\), as in case (i) in Section~\ref{subsec:anopenball}. According to Lemma~\ref{lemma:wavenumbers}, the bifurcation condition~\eqref{eq:bifurcation2} holds for these wavenumbers, and for no other wave numbers larger than \(6\). In fact, one can check that equality is obtained for \(\xi \approx 0.571\) and \(\alpha \approx -69.9\), with \(a \approx 7.65\), and that \(k=1,2,3,4,5\) do not satisfy the same bifurcation condition.  

The resulting kernel is  exactly three-dimensional and, via Theorem~\ref{thm:threedim} and the analysis pursued in Section~\ref{subsec:anopenball}, it gives rise to a full three-dimensional ball of solutions, attained in the horisontal direction as nonlinear perturbations of the linear hull of \(\cos(6q)\), \(\cos(10q)\) and \(\cos(15q)\). 

Figure~\ref{fig:threedimwaves} shows the envelope of three such waves; one where all coefficients are positive and equal, one where the coefficient in front of  the highest mode is zero (which gives a bimodal wave), and one where the coefficient in front of the highest mode is negative (but of the same size as those for the modes \(k_1=6\) and \(k_2=10\)). Note that although all of the waves constructed in this paper are symmetric around the axis \(q=0\), they are not necessarily so around their highest crest or lowest trough.
\end{example}

\begin{figure}
\includegraphics[width=0.8\linewidth]{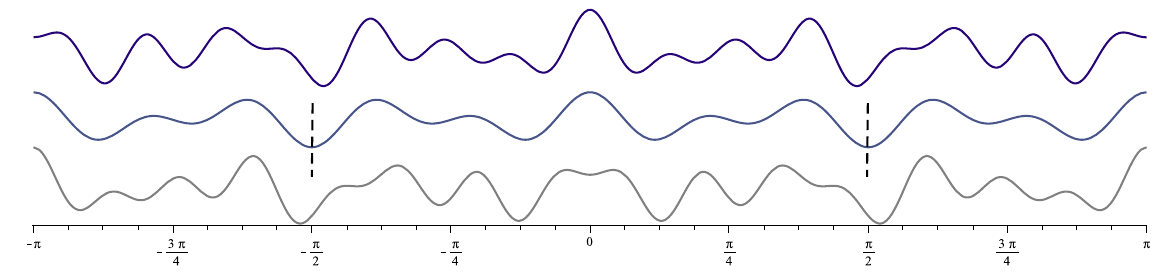}
\caption{\small Three examples of waves obtained as nonlinear perturbations of the linear hull of \(\cos(6q)\), \(\cos(10q)\) and \(\cos(15q)\). Note that although all of the waves constructed in this paper are symmetric around the axis \(q=0\), they are not necessarily so around their highest crest or lowest trough. Note also the wave in the middle is in fact a bimodal one---its minimal period is \(\pi\), not \(2\pi\), as for the two others.
}
\label{fig:threedimwaves}
\end{figure}

\noindent {\bf Acknowledgement.} The authors would like to thank the referee for several suggestions that helped improve the final form of the manuscript. ME furthermore acknowledges the support of the NRC grant \emph{Nonlinear Water Waves}, and EW acknowledges the support of the Swedish Research Council (grant no. 621-2012-3753).

{\small

}

\end{document}